\newtheorem{example}{Example}
\newtheorem{definition}{Definition}
\newtheorem{theorem}{Theorem}
\newtheorem{lemma}{Lemma}
\newtheorem{corollary}{Corollary}
\newtheorem{claim}{Claim}
\theoremstyle{remark}
\newtheorem{remark}{Remark}
\begin{document}

\title[Generalized $LS$-sequences]{Discrepancy of generalized $LS$-sequences}

\subjclass[2010]{11K38, 11J71, 11A67} \keywords{Discrepancy, LS-sequence,
uniform distribution, beta-expansion}

\author[M.R. Iac\`o]{Maria Rita Iac\`o}
\address{M.R. Iac\`o \newline
\indent Graz University of Technology, \newline
\indent Institute of Mathematics A,\newline
\indent  Steyrergasse 30, 8010 Graz, Austria.}
\email{iaco\char'100math.tugraz.at}

\author[V. Ziegler]{Volker Ziegler}
\address{V. Ziegler \newline
\indent Institute of mathematics,\newline
\indent Universtity of Salzburg,  \newline
\indent Hellbrunner Strasse 34, \newline
\indent 5020 Salzburg, Austria.}
\email{volker.ziegler\char'100sbg.ac.at}

\thanks{The first author was supported by the Austrian Science Fund (FWF) Project F5510 (part of the Special Research
Program (SFB) \textquotedblleft Quasi-Monte Carlo Methods: Theory and Applications\textquotedblright) and partially supported by the Austrian
Science Fund (FWF): W1230, Doctoral Program ``Discrete Mathematics''. The second author was supported by the Austrian Science Fund (FWF) under the project 
P~24801-N26.}

\begin{abstract}
The $LS$-sequences are a parametric family of sequences of points in the unit interval. They were introduced by Carbone \cite{carbone}, who also proved that 
under an appropriate choice of the parameters $L$ and $S$, such sequences are low-discrepancy.
The aim of the present paper is to provide explicit constants in the bounds of the discrepancy of $LS$-sequences. Further, we generalize the construction 
of Carbone \cite{carbone} and construct a new class of sequences of points in the unit interval, the generalized $LS$-sequences.
\end{abstract}

\subjclass[2010]{11K38 \and 11J71 \and 11A67} \keywords{Discrepancy \and LS-sequence \and
uniform distribution \and beta-expansion}

\maketitle

\section{Introduction}
A sequence $(x_n)_{n \in \mathbb{N}}$ of points in $[0,1)$ is called
\textit{uniformly distributed modulo 1} (u.d. mod 1) if
\begin{equation*}
 \lim_{N \rightarrow \infty} \frac{1}{N}\sum_{n = 1}^N \mathbf{1}_{[a, b)} (x_n)
= \lambda([a,b))
\end{equation*}
for all intervals $[a, b ) \subseteq [0,1)$. A further
characterization of u.d.\ is given by the following well-known result of
Weyl~\cite{weyl}: a sequence $(x_n)_{n \in \mathbb{N}}$ of points in $[0,1)$
is u.d. mod 1 if and only if for every continuous function $f$ on $[0,1)$ the
relation
\begin{equation*}
  \lim_{N \rightarrow \infty} \frac{1}{N}\sum_{n = 1}^N f(x_n) = \int_{[0,1)}
f(x) dx
\end{equation*}
holds. 

An important quantity called \emph{discrepancy} is introduced when dealing with u.d. sequences. It measures the maximal deviation between the empirical 
distribution of a sequence and the uniform distribution.
Let $\omega_N = \{x_1,\dots,x_N\}$ be a finite set of real numbers in $[0,1]$. The quantity
\begin{equation*}
D_N(\omega_N)=\sup_{0\leq a< b\leq 1} \left|\frac{1}{N} \sum_{n=1}^N \mathbf{1}_{[a,b)}(x_n)-(b-a)\right|
\end{equation*}
is called the discrepancy of the given set $\omega_N$.
This definition naturally extends to infinite sequences $(x_n)_{n\in\mathbb{N}}$ by associating to it the sequence of positive real numbers $D_N (\{x_1, x_2, 
\dots, x_N \})$. We denote by $D_N(x_n)$ the discrepancy of the initial segment $\{x_1, x_2, \dots, x_N\}$ of the infinite sequence.

Sometimes it is also useful to restrict the family of intervals considered in the definition of discrepancy to intervals of the form $[0, a)$ 
with $0 < a \leq 1$. This leads to the following definition of star-discrepancy
\begin{equation*}
D_N^*(\omega_N)=\sup_{0< a\leq 1} \left|\frac{1}{N} \sum_{n=1}^N \mathbf{1}_{[0,a)}(x_n)-a\right|\ .
\end{equation*}
It is a well known result that a sequence $(x_n)_{n\in\mathbb{N}}$ of points in $[0, 1]$ is u.d. if and only if 
\begin{equation*}
\lim_{N\to\infty} D_N^*(x_n)=0\ .
\end{equation*}

Sequences whose discrepancy is of order $\mathcal{O}(N^{-1}\log N)$ are called \emph{low-discrep\-ancy} sequences.
These sequences are of particular interest in the theory of numerical integration and are used in the \emph{Quasi-Monte Carlo (QMC)} integration.
For more information on discrepancy theory, low-discrepancy sequences and QMC integration see~\cite{dt, kn}.

We now recall the splitting procedure that gives rise to the generalized $LS$-sequences as sequences of points associated to a sequence of partitions of the 
unit interval.

\begin{definition}[Kakutani splitting procedure]
If $\alpha \in (0,1)$ and $\pi = \{[t_{i-1}, t_i): 1 \leq i \leq k\}$ is any
partition of $[0,1)$, then $\alpha \pi$ denotes its so-called
$\alpha$-refinement, which is obtained by subdividing all intervals of $\pi$
having maximal length into two parts, proportional to $\alpha$ and $1- \alpha$,
respectively. The so-called Kakutani's sequence of partitions $(\alpha^n
\omega)_{n \in \mathbb{N}}$ is obtained as the successive $\alpha$-refinement of
the trivial partition $\omega = \{[0,1)\}$.
\end{definition}

The notion of $\alpha$-refinements can be generalized in a natural way to
so-called $\rho$-refinements.

\begin{definition}[$\rho$-refinement]
Let $\rho$ denote a non-trivial finite partition of $[0,1)$. Then the
$\rho$-refinement of a partition $\pi$ of $[0,1)$, denoted by $\rho \pi$, is
given by subdividing all intervals of maximal length positively homothetically
to $\rho$. Note that the $\alpha$-refinement is a special case with $\rho =
\{[0, \alpha), [\alpha , 1)\}$.
\end{definition}

By a classical result due to Kakutani~\cite{kakutani}, for any $\alpha$ the sequence
of partitions  $(\alpha^n \omega)_{n \in \mathbb{N}}$ is uniformly distributed,
which means that for every interval $[a,b] \subset [0,1]$,
\begin{equation*}
 \lim_{n \rightarrow \infty} \frac{1}{k(n)}\sum_{i=1}^{k(n)}
\mathbf{1}_{[a,b]}(t_i^n) = b-a,
\end{equation*}
where $k(n)$ denotes the number of intervals in $\alpha^n
\omega=\{[t_{i-1}^n,t_i^n),~1 \leq i \leq k(n)\}$. The same result holds for any
sequence of $\rho$-refinements of $\omega$, due to a result of
Vol\v{c}i\v{c}~\cite{volcic} (see also~\cite{ah,drmota}).

The generalized $LS$-sequence of partitions represent a special case of a $\rho$-refine\-ment. 

\begin{definition}[Generalized $LS$-sequence of partitions]\label{Def:LS_partition}
Let $L_1,\dots,L_k$ be non-negative integers, with $L_1L_k\neq 0$.
We define the generalized $LS$-sequence of partitions $(\rho^n_{L_1,\dots, L_k} \omega)_{n \in
\mathbb{N}}$ as the successive $\rho$-refinement of the trivial partition
$\omega$, where $\rho_{L_1,\dots, L_k}$ consists of $L_1+L_2+\dots +L_k$ intervals such that $L_1$
has length $\beta$, $L_2$ has length $\beta^2$ and so on up to $L_k$ having length $\beta^k$.
\end{definition}

Note that necessarily $L_1\beta+\dots +L_k\beta^k = 1$
holds, and consequently for each $k$-tuple $(L_1,\dots,L_k)$ of parameters satisfying the assumptions made in Definition \ref{Def:LS_partition}
there exists exactly one positive real number $\beta$ satisfying $L_1\beta+\dots+L_k\beta^k=1$. 

If $k=2$, then we obtain the definition of the classical $LS$-sequence of partitions introduced by Carbone \cite{carbone}.
Given a sequence of partitions we can 
assign a sequence of points by ordering the left endpoints of the intervals in the partition. The corresponding $LS$-sequences of points, denoted by 
$(\xi_{L,S}^n)_{n\in\mathbb{N}}$, have been introduced by Carbone \cite{carbone}, who proved that 
whenever $L\geq S$ there exists a positive constant $k_1$ such that
\begin{equation}\label{eq:Discrepancy_LS_general}
D_N(\xi_{L,S}^{1}, \xi_{L,S}^{2},\ldots, \xi_{L,S}^{N})\leq k_1\frac{\log N}{N}\ .
\end{equation}
One purpose of the present paper is to give an estimate for $k_1$. This problem still open in \cite{carbone} can be solved by the numeration approach 
presented by Aistleitner et.al.~\cite{ahz}. However the main focus of the present paper is to generalize the construction of classical $LS$-sequences of points by 
Carbone~\cite{carbone}. Therefore we introduce a numeration system for the integers which is the generalization of the numeration system introduced by 
Aistleitner et.al.~\cite{ahz}. This numeration system will lead us to the definition of generalized $LS$-sequences of points. This definition by a numeration 
system allows us to prove several auxiliary results on the distribution of generalized $LS$-sequences which were proved in the case that $k=2$ by 
Carbone~\cite{carbone} and Aistleitner et.al.~\cite{ahz}. All this is the content of the next 
section. In Section \ref{Sec:ProofTh1} we explicitly compute the constant $k_1$ in \eqref{eq:Discrepancy_LS_general} and obtain explicit upper bounds for 
the discrepancy of the classical $LS$-sequences (see Theorem~\ref{constant}). The computation of the discrepancy in the classical case, i.e. the case that
$k=2$ will pave the way 
to compute explicit bounds for the discrepancy of generalized $LS$-sequences, which we compute in the final section of this paper.

\section{Generalized $LS$-sequences}

Let us consider the generalized $LS$-sequence of partitions. Then the partition $\rho^n_{L_1,\dots, L_k} \omega$
consists of intervals having lengths $\beta^n,\dots ,\beta^{n+k-1}$ and this
fact makes the analysis of the generalized $LS$-sequences more complicated, compared to the
analysis of the classical ones, where only two lengths are considered. We denote by $t_n$ the total number of
intervals of $\rho^n_{L_1,\dots, L_k} \omega$, and correspondingly by $l_{n,1}, \dots , l_{n,k}$ the number of intervals of the $n$-th partition having length 
$\beta^n, \dots ,\beta^{n+k-1}$ respectively.

From a general point of view the only canonical restrictions to the $k$-tuple $(L_1,\dots,L_k)$ are that the $L_i$ are non-negative integers for all 
$i=1,\dots,k$ such that $L_1L_k\neq 0$. However, we are interested in low-discrepancy sequences and we will see (Remark \ref{rem:why_Pisot})
that we obtain low 
discrepancy sequences if and only if all roots but one (counted with multiplicity) of the polynomial $L_kX^k+\dots+L_1 X-1$
have absolute value smaller than one.
Therefore we will assume from now on that the polynomial $L_kX^k+\dots+L_1 X-1$ has no double zeros and that 
there is exactly one root $0<\beta<1$ and all other roots have absolute value less than $1$. Let us remark that excluding multiple zeros is
mainly to avoid technical issues and similar results would be obtained if we only assume that the unique root $0<\beta<1$ is simple.

Furthermore let us note that, if we assume that
\begin{equation}
\label{eq:coeff_restriction} L_1\geq L_2\geq \dots\geq L_k>0,
\end{equation}
then $L_kX^k+\dots+L_1 X-1$ is irreducible and is the minimal polynomial of the reciprocal of a Pisot-number (see \cite{Brauer}), i.e. a $k$-tuple $(L_1,\dots,L_k)$
which satisfies \eqref{eq:coeff_restriction} also meets our assumptions made above. 

Let us write $\beta_1=\beta$ and let $\beta_2,\dots,\beta_k$ be the other roots 
of $L_kX^k+\dots+L_1 X-1$. When the coefficients of the polynomial fullfill condition \eqref{eq:coeff_restriction}, then the $\beta_2,\dots,\beta_k$ are the Galois 
conjugates of $\beta$ in some order. Note that since $1/\beta$ is a Pisot-number by our assumptions we have that $|\beta_i|>1$ for $i=2,\dots,k$.

In a first step we consider the quantities $t_n$ and $l_{n,i}$ for $n\geq 0$ and $i=1,\dots, k$. First we observe that
in the $n$-th partition step the longest $l_{n-1,1}$ intervals are divided into $L_1+\dots+L_k$ intervals, where $L_1$ intervals have length
$\beta^n$, $L_2$ intervals have length $\beta^{n+1}$ and so on. Hence we have that
\begin{equation}\label{eq:Rec}
\begin{split}
t_{n}=&\ l_{n-1,1}(L_1+\dots+L_k)+l_{n-1,2}+\dots+l_{n-1,k}\\
=&\ t_{n-1}+l_{n-1,1}(L_1+\dots+L_k-1),\\
l_{n,1}=&\ l_{n-1,2}+L_1l_{n-1,1},\\
&\vdots\\
l_{n,k-1}=&\ l_{n-1,k}+L_{k-1}l_{n-1,1},\\
l_{n,k}=&\ L_{k}l_{n-1,1}
\end{split}
\end{equation}
for all $n>0$. Of course the interval $[0,1)$ yields the initial conditions $t_0=l_{0,1}=1$ and $l_{0,2}=\dots=l_{0,k}=0$ and therefore
we can recursively compute the quantities $t_n,l_{n,1},\dots,l_{n,k}$ for all $n>0$. Since from the recursion point of view the
sequence $(t_n)_{n\geq 0}$ is closely 
related to the sequences $(l_{n,i})_{n\geq 0}$, we define $l_{n,0}=t_n$ in order to state several of our results in a compact way.
Moreover we put $l_{n,j}=0$ if $j>k$ or $n<0$. For our purposes we desire an explicit formula for the quantities $l_{n,i}$:

\begin{lemma}\label{lem:Rec}
 The sequences $(l_{n,i})_{n\geq 0}$, with $i=0,\dots, k$ satisfy the recursion
 $$l_{n,i}=L_1 l_{n-1,i}+\dots+L_kl_{n-k,i}, \qquad n\geq k.$$
 
 In particular, there exist explicit computable constants 
$\lambda_{j,i}$ for $0\leq i \leq k$ and $1\leq j \leq k$ such that
\begin{equation}\label{eq:Rec_explicit}
l_{n,i}=\sum_{j=1}^k \lambda_{j,i} \beta_j^{-n}
\end{equation}
\end{lemma}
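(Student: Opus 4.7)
The plan is to encode \eqref{eq:Rec} as an action on a vector and invoke Cayley--Hamilton. First, set $v_n = (l_{n,1},\dots,l_{n,k})^T$. The last $k$ lines of \eqref{eq:Rec} yield $v_n = M v_{n-1}$ with
\[
M = \begin{pmatrix} L_1 & 1 & 0 & \cdots & 0 \\ L_2 & 0 & 1 & \cdots & 0 \\ \vdots & \vdots & \vdots & \ddots & \vdots \\ L_{k-1} & 0 & 0 & \cdots & 1 \\ L_k & 0 & 0 & \cdots & 0 \end{pmatrix}.
\]
A short computation (expanding $\det(XI-M)$ along the first row and inducting on $k$) shows that the characteristic polynomial equals $P(X)=X^k-L_1X^{k-1}-\cdots-L_k$. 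Cayley--Hamilton then gives $M^k = L_1 M^{k-1}+\cdots+L_k I$, so $v_n = L_1 v_{n-1}+\cdots+L_k v_{n-k}$ for $n\ge k$; reading this componentwise delivers the claimed recursion for $l_{n,i}$ with $i=1,\dots,k$.

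To handle the remaining case $i=0$, I would verify by a straightforward induction that $t_n = l_{n,1}+l_{n,2}+\cdots+l_{n,k}$ for every $n\ge 0$; the inductive step reduces to the first identity of \eqref{eq:Rec} combined with the remaining lines. Since each summand satisfies the linear recursion from the previous paragraph, so does $t_n=l_{n,0}$ by linearity of the recursion.

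For the explicit formula, observe that $X^k P(1/X)=-(L_kX^k+\cdots+L_1X-1)$, so the roots of $P$ are exactly $\beta_1^{-1},\dots,\beta_k^{-1}$; these are pairwise distinct by the standing assumption that the defining polynomial of $\beta$ has no double zeros. The general-solution theory of linear recurrences with distinct characteristic roots therefore gives $l_{n,i}=\sum_{j=1}^k \lambda_{j,i}\,\beta_j^{-n}$, where the coefficients $\lambda_{j,i}$ are uniquely determined by the Vandermonde-type linear system obtained by evaluating this formula at $n=0,1,\dots,k-1$ and matching with the initial data computed from \eqref{eq:Rec}. The main (though still routine) obstacle is the characteristic-polynomial calculation; everything else is a standard application of linear recurrence theory.
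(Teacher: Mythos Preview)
Your argument is correct, and it takes a genuinely different route from the paper's. The paper proceeds by back-substitution: it eliminates $l_{n,k},l_{n,k-1},\dots$ in terms of $l_{\cdot,1}$ using the system \eqref{eq:Rec}, thereby deriving the $k$-term recursion for $l_{n,1}$ directly; it then obtains the closed form for $l_{n,1}$, transfers it to the other $l_{n,i}$ via the relations linking them to $l_{\cdot,1}$, and finally observes that any sequence of the shape \eqref{eq:Rec_explicit} satisfies the stated recursion. Your approach instead packages \eqref{eq:Rec} as $v_n=Mv_{n-1}$ and applies Cayley--Hamilton to $M$, which yields the recursion for all components $i=1,\dots,k$ in one stroke; the case $i=0$ and the closed form then follow exactly as you say. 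Your method is cleaner and more uniform across the indices~$i$; the paper's method is slightly more hands-on but has the side benefit of producing explicit relations among the $\lambda_{j,i}$ (for instance $\lambda_{j,0}=\lambda_{j,1}+\cdots+\lambda_{j,k}$) without solving a Vandermonde system. One small simplification: the identity $t_n=l_{n,1}+\cdots+l_{n,k}$ needs no induction, since by construction every interval of $\rho^n_{L_1,\dots,L_k}\omega$ has length in $\{\beta^n,\dots,\beta^{n+k-1}\}$.
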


\begin{proof}
First, we observe that by inserting the last line of \eqref{eq:Rec} into the second to last line we obtain
$$l_{n,k-1}=L_k l_{n-2,1}+L_{k-1}l_{n-1,1}$$
Now inserting this identity into the third to last line of \eqref{eq:Rec} and going on we end up with 
$$l_{n,1}=L_1 l_{n-1,1}+\dots+L_kl_{n-k,1},$$
hence we proved the first part of the lemma for $i=1$. Since the characteristic polynomial of this recursion is
$$X^k-L_1 X^{k-1}-\dots-L_k$$
there exist constants $\lambda_{j,1}$, with $j=1,\dots,k$ such that
$$l_{n,1}=\sum_{j=1}^k \lambda_{j,1} \beta_j^{-n}.$$
Note that the $1/\beta_j$ for $j=1,\dots, k$ are the roots of the characteristic polynomial.

Since $l_{n,i}=l_{n+1,1}-L_il_{n,1}$ we obtain \eqref{eq:Rec_explicit}, with
$\lambda_{j,i}=(\beta_j^{-1}-L_i)\lambda_{j,1}$ for $i=2,\dots,k$ and $j=1,\dots,k$. And since $t_n=l_{n,0}=l_{n,1}+\dots+l_{n,k}$ 
we see that \eqref{eq:Rec_explicit} also holds for $i=0$, with $\lambda_{j,0}=\lambda_{j,1}+\dots+\lambda_{j,k}$ for $j=1,\dots,k$.
On the other hand every sequence of the form \eqref{eq:Rec_explicit} 
fulfills a recursion of the required form and the proof of the lemma is complete. 
\end{proof}

Let us note that the explicit computation of the $\lambda$'s is easy for a given $k$-tuple $(L_1,\dots,L_k)$.
Indeed one can compute the values of
$l_{n,i}$ for all $i=0,1,\dots,k$ and $n=0,1,\dots,k-1$ by using the recursion \eqref{eq:Rec}. Therefore \eqref{eq:Rec_explicit} gives for each $i=0,1,\dots,k$ 
a linear inhomogeneous system with unknowns $\lambda_{j,i}$. Solving for the $\lambda$'s by Cramer's rule we obtain
\begin{equation}\label{eq:lambdas}
\lambda_{j,i}=\frac{\left| \begin{array}{ccccccc}
\beta_1^0 & \dots& \beta_{j-1}^0 & l_{0,i} & \beta_{j+1}^0  &\dots & \beta_k^0 \\
\beta_1^{-1} & \dots& \beta_{j-1}^{-1} & l_{1,i} & \beta_{j+1}^{-1}  &\dots & \beta_k^{-1} \\
\vdots & \ddots&\vdots &\vdots & \vdots&\ddots & \vdots \\
\beta_1^{1-k} & \dots& \beta_{j-1}^{1-k} & l_{k-1,i} & \beta_{j+1}^{1-k}  &\dots & \beta_k^{1-k}
\end{array} \right|}
{\left| \begin{array}{ccccccc}
\beta_1^0 & \dots& \beta_{j-1}^0 & \beta_j^0 & \beta_{j+1}^0  &\dots & \beta_k^0 \\
\beta_1^{-1} & \dots& \beta_{j-1}^{-1} & \beta_j^{-1} & \beta_{j+1}^{-1}  &\dots & \beta_k^{-1} \\
\vdots & \ddots&\vdots &\vdots & \vdots&\ddots & \vdots \\
\beta_1^{1-k} & \dots& \beta_{j-1}^{1-k} & \beta_j^{1-k} & \beta_{j+1}^{-k}  &\dots & \beta_k^{1-k}
\end{array} \right|}.
\end{equation}
 
Let us also state another property of the sequences $(l_{n,i})_{n\geq 0}$ for $i=1,\dots,k$, which we need at several places
in the construction of generalized $LS$-sequences:

\begin{lemma}\label{lem:sum_growth}
 We have $l_{n,1}+\dots+l_{n,m} \geq l_{n-1,1}+\dots+l_{n-1,m+1}$ for all $m\geq 1$.
\end{lemma}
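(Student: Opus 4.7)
The plan is to expand the left-hand side using the recursion \eqref{eq:Rec}. Recall from that recursion that $l_{n,i} = l_{n-1,i+1} + L_i\, l_{n-1,1}$ for $1 \leq i \leq k-1$ and $l_{n,k} = L_k\, l_{n-1,1}$. If we adopt (as the statement already allows) the convention $l_{n-1,k+1}=0$, the two formulas merge into the single uniform identity
\[
l_{n,i} = l_{n-1,i+1} + L_i\, l_{n-1,1} \qquad (1 \leq i \leq k).
\]

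Summing this identity for $i = 1, \dots, m$ (first treating $1 \leq m \leq k$) yields
\[
\sum_{i=1}^{m} l_{n,i} \;=\; \sum_{i=1}^{m} l_{n-1,i+1} + \Bigl(\sum_{i=1}^{m} L_i\Bigr) l_{n-1,1} \;=\; \sum_{i=2}^{m+1} l_{n-1,i} + \Bigl(\sum_{i=1}^{m} L_i\Bigr) l_{n-1,1}.
\]
Comparing with the right-hand side $\sum_{i=1}^{m+1} l_{n-1,i}$, the desired inequality is equivalent to $\bigl(\sum_{i=1}^{m} L_i\bigr) l_{n-1,1} \geq l_{n-1,1}$, i.e.\ $L_1 + \dots + L_m \geq 1$. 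This is immediate from the standing assumption $L_1 L_k \neq 0$, which forces the non-negative integer $L_1$ to satisfy $L_1 \geq 1$.

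For $m > k$ the inequality follows from the case $m = k$ by invoking the convention $l_{n,j} = 0$ for $j > k$, since then both sides stabilize once $m \geq k$. The only real subtlety is therefore bookkeeping on the boundary index, and once the convention $l_{n-1,k+1}=0$ is in force the statement is a one-line consequence of $L_1 \geq 1$.
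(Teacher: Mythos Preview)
Your proof is correct and follows essentially the same approach as the paper's: both expand via the recursion $l_{n,i}=l_{n-1,i+1}+L_i\,l_{n-1,1}$ and reduce the inequality to $L_1+\dots+L_m\geq 1$, which holds because $L_1\geq 1$. The only cosmetic difference is that the paper writes the recursion backwards as $l_{n-1,i+1}=l_{n,i}-L_i\,l_{n-1,1}$ and treats $m\geq k$ by a separate monotonicity remark, whereas you uniformly include $m=k$ in the main computation and then reduce $m>k$ to it.
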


\begin{proof}
  If $m\geq k$ this is clear since the sequences $l_{n,m}$ are 
all strictly monotone increasing with $n$ for $m=1,\dots,k$ and $l_{n,m}=0$ if $m>k$. In case that $m=1,\dots,k-1$ we have
\begin{equation*}
\begin{split}
l_{i-1,1}+\dots&+l_{i-1,m+1}\\
= &\, l_{i-1,1}+(l_{i,1}-L_1l_{i-1,1})+\dots+(l_{i,m}-L_ml_{i-1,1})\\
\leq &\, l_{i,1}+\dots+l_{i,m}.
\end{split}
\end{equation*}
\end{proof}

Our next step is to introduce the numeration system which will be the basis for our construction
of the generalized $LS$-sequence. Let $N\geq 0$ be a fixed integer and choose $n$ such that $t_n\leq
N < t_{n+1}$. We construct finite sequences $(\epsilon_m)_{0 \leq m \leq n}$,
$(\eta_m)_{0 \leq m \leq n}$, $(N_m)_{0 \leq m \leq n}$ and $(T_m)_{0 \leq m \leq n}$ recursively in the following way: 

\begin{itemize}
\item First we put $N_n=N$, $T_n=t_n$, $\epsilon_n=1$ and $\eta_n=\lfloor (N-T_n)/l_{n,1}\rfloor$.
\end{itemize}

Assume that we have computed the quantities
$N_{i},T_{i},\epsilon_{i}$ and $\eta_{i}$ for all indices $n\geq i >m\geq 0$.

\begin{itemize}
\item We denote by $j$ the unique integer
such that $\epsilon_{m+1}=\dots=\epsilon_{m+j}=0$ and $\epsilon_{m+j+1}\neq 0$,
in particular if $\epsilon_{m+1}\neq 0$ we put $j=0$.
\item Now we compute $N_{m}$ and $T_{m}$: 
\begin{equation*}
 \begin{split}
N_m=&\ N_{m+1}-\epsilon_{m+1}T_{m+1}-\eta_{m+1}l_{m+1,1}\\
T_m=&\ l_{m,1}+\dots+l_{m,j+2}.
\end{split}
\end{equation*}
\item If $N_m<T_m$ we put $\epsilon_m=\eta_m=0$. Otherwise we put $\epsilon_m=1$ and
$$\eta_m=\lfloor (N_m-T_m)/l_{m,1}\rfloor.$$
\end{itemize}



With this definition we obtain a representation for $N$ of the form
\begin{equation}\label{Rep:N}
 N=\sum_{i=0}^{n} (\epsilon_i T_i+\eta_il_{i,1}).
\end{equation}

Obviously $\epsilon_i\in\{0,1\}$ for all $i=0,1,\dots,n$. Let us note that we also have $0\leq \eta_i\leq L_1+\dots+L_k-2$
for all $i=0,1,\dots,n$. Indeed assume to the contrary that $\eta_m\geq L_1+\dots+L_k-1$ and that
$\epsilon_{m+1}=\dots=\epsilon_{m+j}=0$ but $\epsilon_{m+j+1}\neq 0$.
Then we would obtain in case that $j>0$
\begin{equation*}
\begin{split}
T_{m+1}>&\,  N_{m+1}\geq T_m+(L_1+\dots+L_k-1)l_{m,1}\\
= &\, l_{m,1}+l_{m,2}+\dots+l_{m+j+2}+(L_1+\dots+L_k-1)l_{m,1}\\
\geq &\, (L_1 l_{m,1}+l_{m,2})+\dots+(L_{j+1} l_{m,1}+l_{m,j+2})\\
= &\, l_{m+1,1}+\dots+l_{m+1,j+1}=T_{m+1}
\end{split}
\end{equation*}
a contradiction. In case that $j=0$ we similarly have
\begin{equation*}
\begin{split}
T_{m+1}\geq &\, l_{m+1,1}+l_{m+1,2}>l_{m+1,1}\\
\geq &\, N_m \geq T_m+(L_1+\dots+L_k-1)l_{m,1}\\
= &\, l_{m,1}+l_{m,2}+\dots+l_{m+j+2}+(L_1+\dots+L_k-1)l_{m,1}\\
\geq &\, (L_1 l_{m,1}+l_{m,2})+\dots+(L_{j+1} l_{m,1}+l_{m,j+2})\\
= &\, l_{m+1,1}+\dots+l_{m+1,j+1}=T_{m+1}
\end{split}
\end{equation*}
again a contradiction.

The following lemma gives a bijection between the integers and digit-expansions of the form given above. Note that the following lemma is
a generalization of a result due to Aistleitner et.al. for the classical $LS$-sequences \cite[Lemma 3]{ahz}.

\begin{lemma}\label{Lem:Digits}
There is a bijection between positive integers and finite sequences of the form
\[\mathcal D=((\epsilon_n,\eta_n),\ldots,(\epsilon_0,\eta_0))\]
such that $\epsilon_i\in\{0,1\}$, $\epsilon_n=1$, $0\leq \eta_i\leq L_1+\dots +L_k-2$ for all $0\leq i\leq n$,
$\epsilon_i=0$ implies $\eta_i=0$ and for all $1\leq m\leq k-1$ we have that $\eta_i\geq L_1+\dots+L_m-1$ implies
$\epsilon_{i+m}=0$.

This bijection is given by
\[\Psi(\mathcal D)=\sum_{i=0}^n (\epsilon_iT_i+\eta_il_{i,1})\]
and its inverse
\[\Phi(N)=((\epsilon_n,\eta_n),\ldots,(\epsilon_0,\eta_0)),\]
where the $T_i$, $\epsilon_i$ and $\eta_i$ are computed by the algorithm described
above.
\end{lemma}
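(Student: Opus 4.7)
The plan is to establish the bijection by verifying both $\Psi\circ\Phi=\mathrm{id}$ on positive integers (by telescoping) and $\Phi\circ\Psi=\mathrm{id}$ on valid digit sequences (by induction), in the spirit of the $k=2$ argument from \cite{ahz} but with an extra family of constraints to track.

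First I would check that the algorithm terminates and its output satisfies every stated constraint. Termination follows from the remainder bound $0\leq N_i<l_{i+1,1}$ which is built into the floor in the definition of $\eta_{i+1}$. The conditions $\epsilon_i\in\{0,1\}$, $\epsilon_n=1$, $\epsilon_i=0\Rightarrow\eta_i=0$, and the general bound $0\leq\eta_i\leq L_1+\dots+L_k-2$ are either immediate from the algorithm or are precisely the contradiction argument already given in the excerpt. The delicate part is constraint (v), which I would deduce from the key identity
\[ T_{i+1} \;=\; T_i + (L_1+\dots+L_{j+1}-1)\,l_{i,1}, \]
where $j$ is the jump above position $i$ and the convention $l_{m,p}=0$ for $p>k$ handles the edge case $j+1\geq k$. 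This identity is a short unfolding of the recursion~\eqref{eq:Rec}. Combined with $N_i=N_{i+1}<T_{i+1}$ when $\epsilon_{i+1}=0$ (so $j\geq1$) or with the remainder bound $N_i<l_{i+1,1}$ when $\epsilon_{i+1}=1$ (so $j=0$), it forces $\eta_i\leq L_1+\dots+L_{j+1}-2$, which is the contrapositive of (v). The identity $\Psi(\Phi(N))=N$ is then precisely~\eqref{Rep:N}, obtained by telescoping $N_m=N_{m+1}-\epsilon_{m+1}T_{m+1}-\eta_{m+1}l_{m+1,1}$ and using the terminal value $N_{-1}=0$.

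For $\Phi(\Psi(\mathcal D))=\mathcal D$ on valid $\mathcal D$, I would prove by induction on $m$ the sharpened partial-sum bound
\[ M_m := \sum_{i=0}^{m}\bigl(\epsilon_i T_i+\eta_i l_{i,1}\bigr) \;<\; \begin{cases} l_{m+1,1}, & \epsilon_{m+1}=1,\\ T_{m+1}, & \epsilon_{m+1}=0. \end{cases}\]
The step splits into four sub-cases on the pair $(\epsilon_m,\epsilon_{m+1})$: the two cases $\epsilon_m=0$ reduce to the monotonicity $T_m\leq T_{m+1}$ or the inequality $l_{m,1}+l_{m,2}\leq l_{m+1,1}$ (both consequences of $L_1\geq 1$ and the identity above); the two cases $\epsilon_m=1$ use the sharpened bound $\eta_m\leq L_1+\dots+L_{j+1}-2$ and the identity, so that $M_m<T_m+(\eta_m+1)l_{m,1}$ collapses exactly to $T_{m+1}$ or $l_{m+1,1}$. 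Specialising at $m=n$ (with $\epsilon_{n+1}=0$ by convention, so $T_{n+1}=t_{n+1}$) gives $\Psi(\mathcal D)\in[t_n,t_{n+1})$. Finally, running $\Phi$ on $N=\Psi(\mathcal D)$ recovers $\mathcal D$ digit by digit: at each step, $M_{m-1}<l_{m,1}$ when $\epsilon_m=1$ makes the floor in $\eta_m^{\Phi}$ return exactly $\eta_m$, and $M_{m-1}<T_m$ when $\epsilon_m=0$ correctly sets $\epsilon_m^{\Phi}=0$.

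The main obstacle is carrying this unified inductive bound through all four sub-cases simultaneously and justifying the identity for $T_{i+1}-T_i$ uniformly across all values of $j$, including the edge case $j+1\geq k$ where the convention $l_{m,p}=0$ for $p>k$ must be invoked.
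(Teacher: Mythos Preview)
Your proposal is correct and the argument goes through, but it takes a genuinely different route from the paper for the surjectivity half. Both you and the paper dispatch $\Psi\circ\Phi=\mathrm{id}$ by telescoping and verify that the algorithm's output satisfies the digit constraints (your derivation of constraint~(v) via the contrapositive and the identity $T_{i+1}=T_i+(L_1+\dots+L_{j+1}-1)l_{i,1}$ is essentially a repackaging of the paper's Step~I induction on $m$). The real divergence is in establishing $\Phi\circ\Psi=\mathrm{id}$: the paper does \emph{not} verify this directly. Instead it argues by cardinality (Steps~II--IV): since $\Phi$ is injective from $\{1,\dots,t_n-1\}$ into valid sequences of length $\leq n$, it suffices to show there are exactly $t_n-1$ such sequences, which is done by an inductive count tracking also how many length-$(n{+}1)$ sequences begin with $(1,0)$. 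Your approach bypasses this count entirely by proving the sharpened partial-sum bound $M_m<l_{m+1,1}$ or $M_m<T_{m+1}$ and then replaying the greedy algorithm on $\Psi(\mathcal D)$. One small caution: your ``key identity'' for $T_{i+1}$ is literally valid only when $\epsilon_{i+1}=0$; when $\epsilon_{i+1}=1$ the right-hand side equals $l_{i+1,1}$ rather than $T_{i+1}$ (you use it correctly, but the phrasing conflates the two), and for the edge case $j+1\geq k$ you need the convention $L_p=0$ for $p>k$ rather than $l_{m,p}=0$ for $p>k$. What your approach buys is a more constructive proof whose partial-sum inequalities are exactly what Lemma~\ref{Lem:Carbone} needs next; what the paper's counting buys is the explicit enumeration of valid sequences, at the cost of the somewhat intricate telescoping in Step~IV.
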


\begin{proof}
In order to prove bijectivity we have to show that for every integer $N$ and every finite sequence $\mathcal D$ we have
$\Psi(\Phi(N))=N$ and $\Phi(\Psi(\mathcal D))=\mathcal D$. The first equation is evident from the presented algorithm, i.e. $\Phi$ is
injective. Thus
we are left to prove that $\Phi(\Psi(\mathcal D))=\mathcal D$, i.e. $\Phi$ is surjective. The proof is technical
and we proceed in several steps:\\

\noindent\textbf{Step I:} We show that 
$$\Phi(N)=((\epsilon_n,\eta_n),\ldots,(\epsilon_0,\eta_0))$$
such that the $\epsilon_i$ and $\eta_i$ for all $i\geq 0$ satisfy the conditions of the lemma. Thus we prove that $\Phi(N)$ is well defined.

Note that from the algorithm it is evident that 
$\epsilon_i\in\{0,1\}$, $\epsilon_n=1$ and that $\epsilon_i=0$ implies $\eta_i=0$.
Moreover, in the discussion after \eqref{Rep:N} we have shown that $0\leq 
\eta_i\leq L_1+\dots +L_k-2$ for all $0\leq i\leq n$. Therefore we have to show that $\eta_i\geq L_1+\dots+L_m-1$ implies
$\epsilon_{i+m}=0$.

We proceed by induction on $m$.
We start with the induction basis $m=1$, i.e. we assume that $\eta_i\geq L_1-1$. Let us assume for the moment that $\epsilon_{i+1}\neq 0$.
Then we get that
$$N_i\geq \epsilon_iT_i+\eta_il_{i,1}\geq l_{i,1}+l_{i,2}+(L_1-1)l_{i,1}=l_{i+1,1}>N_i$$
a contradiction and therefore we conclude that $\epsilon_{i+1}=0$, i.e. we have proved the induction basis.

Now, let us assume that $\eta_i\geq L_1+\dots+L_{m-1}-1$ implies $\epsilon_{i+m-1}=0$ for all $1\leq m \leq M-1$ and assume that
$\eta_i\geq L_1+\dots+L_{M}-1$. We aim to show that $\epsilon_{i+M}\neq 0$ yields a contradiction.
By the induction basis, see the paragraph above, we may assume that $M\geq 2$. Moreover by induction we may assume that $\epsilon_{i+m}=0$ for
all $1\leq m <M$.
Assuming $\epsilon_{i+M}\neq 0$
implies $T_{i}= l_{i,1}+\dots+l_{i,M+1}$ and $T_{i+1}=l_{i+1,1}+\dots+l_{i+1,M}$. Therefore we deduce 
\begin{align*}
T_{i+1}>& N_{i+1}=N_i\geq T_i+\eta_i l_{i,1}\\
\geq &\, l_{i,1}+\dots+l_{i,M+1}+(L_1+\dots+L_{M}-1)l_{i,1}\\
=& (L_1 l_{i,1}+l_{i,2})+\dots+(L_{M} l_{i,1}+l_{i,M+1})\\
=&\, l_{i+1,1}+\dots+l_{i+1,M}=T_{i+1}
\end{align*}
a contradiction, i.e. $\epsilon_{i+M}= 0$.\\

\noindent\textbf{Step II:}
It is enough to show that there are exactly $t_n-1$ sequences of length $\leq n$ satisfying the conditions of the Lemma. 

Indeed, we have already seen that $\Phi$ is injective. Therefore we have to show that
$\Phi$ is surjective. In particular, it is enough to prove that $\Phi$ induces a surjective map between the positive integers $<t_n$ 
and sequences of length $\leq n$, which satisfy the restrictions of the Lemma.
Therefore we have to prove the following claim:

\begin{claim}
There are exactly $t_n-1$ sequences of length $\leq n$ satisfying the conditions of the Lemma. Moreover,
there are exactly $l_{n-1,1}$ such sequences of length $n$ of the form $\mathcal D=((1,0),\dots)$.
\end{claim}

We will prove that claim be induction.\\

\noindent\textbf{Step III:} The induction basis is evident, since there are exactly $t_1-1=L_1+\dots+L_k-1$ sequences of length $1$. Moreover
there is only one sequence of length $1$ starting with the pair $(1,0)$.\\

\noindent\textbf{Step IV:} Suppose the claim is true for all integers $M<n+1$. We show that it is also true for $M=n+1$. 

First, let us show that there are exactly $l_{n,1}$ sequences of length $n+1$ starting with $(1,0)$. Let $1\leq m \leq k$, then 
there are exactly $(L_1+\dots+L_m -1)l_{n-m,1}$ sequences of the form
$$((1,0),\stackrel{m-1\;\; \text{times}}{\overbrace{(0,0),\dots,(0,0)}},(1,\eta),\dots)$$
 by induction hypothesis and the assumptions of the lemma
Similarly there are exactly $t_{n-k}$ sequences of the form
$$((1,0),\stackrel{\text{at least $k$ times}}{\overbrace{(0,0),\dots,(0,0)}},(1,\eta),\dots).$$
Therefore the number of sequences of length $n+1$ starting with $(1,0)$ is
\begin{equation*}
\begin{split}
 (L_1-1)l_{n-1,1}&+(L_1+L_2-1)l_{n-2,1}+\dots\\+&(L_1+\dots+L_k-1)l_{n-k,1}+t_{n-k}=\\
 (L_1-1)l_{n-1,1}&+(L_1+L_2-1)l_{n-2,1}+\dots\\+&(L_1+\dots+L_{k-1}-1)l_{n-k+1,1}+t_{n-k+1}=\\
 (L_1-1)l_{n-1,1}&+(L_1+L_2-1)l_{n-2,1}+\dots\\+&(L_1+\dots+L_{k-1}-1)l_{n-k+1,1}+l_{n-k+1,1}+\dots+l_{n-k+1,k}.
 \end{split}
\end{equation*}
Now note that
\begin{equation*}
\begin{split}
(L_1+&\dots+L_{m-1}-1)l_{n,1}+l_{n,1}+\dots+l_{n,m}\\
&=\,(L_1 l_{n,1}+l_{n,2})+\dots+ (L_{m-1} l_{n,1}+l_{n,m})\\
&=\,l_{n+1,1}+\dots+l_{n+1,m-1}
\end{split}
\end{equation*}
applying this identiy to the equation above we obtain that there are
\begin{equation*}
\begin{split}
 (L_1-1)l_{n-1,1}&+(L_1+L_2-1)l_{n-2,1}+\dots\\+&(L_1+\dots+L_{k-1}-1)l_{n-k+1,1}+l_{n-k+1,1}+\dots+l_{n-k+1,k}=\\
 (L_1-1)l_{n-1,1}&+(L_1+L_2-1)l_{n-2,1}+\dots\\+&(L_1+\dots+L_{k-2}-1)l_{n-k+1,1}+l_{n-k+2,1}+\dots+l_{n-k+2,k-1}=\\
 &\vdots\\
 (L_1-1)l_{n-1,1}&+l_{n-1,1}+l_{n-1,2}=L_1l_{n-1,1}+l_{n-1,2}=l_{n,1}
 \end{split}
\end{equation*}
sequences of length $n+1$ starting with $(0,1)$.
Therefore there are $(L_1+\dots+L_k-1)l_{n,1}$ sequences of lenght $n+1$ starting with $(1,\eta)$ and $0\leq \eta\leq L_1+\dots+L_k-2$ and
by induction there are $t_n-1$ sequences of length $<n+1$. Therefore all togehter there are 
$$(L_1+\dots+L_k-1)l_{n,1}+t_n-1=t_{n+1}-1$$
sequences of length $\leq n+1$, satisfying the conditions of the Lemma.
\end{proof}

The numeration system \eqref{Rep:N} allows us to generate the generalized $LS$-sequences of points in a direct way, as clarified in the following Definition

\begin{definition}\label{seq_of_point}
Let $N$ be an integer with representation given in \eqref{Rep:N}. Then
\begin{align*}
\xi_{L_1,\dots, L_k}^N = & \sum_{i=0}^n \left(\beta^{i+1}\min\{L_1,\epsilon_i+\eta_i\}\right.\\
&+\beta^{i+2}(\max\{\epsilon_i+\eta_i-L_1,0\}-\max\{\epsilon_i+\eta_i-L_1-L_2,0\})\\
&+\beta^{i+3}(\max\{\epsilon_i+\eta_i-L_1-L_2,0\}-\max\{\epsilon_i+\eta_i-L_1-L_2-L_3,0\})+\\
& \vdots\\
&+\beta^{i+k-1}(\max\{\epsilon_i+\eta_i-L_1-\dots-L_{k-2},0\}\\
&\quad-\max\{\epsilon_i+\eta_i-L_1-\dots -L_{k-1},0\})\\
&+\left.\beta^{i+k}(\max\{\epsilon_i+\eta_i-L_1-\dots-L_{k-1},0\})\right)\ .
\end{align*}
\end{definition}

The easiest example of a generalized $LS$-sequence is obtained by considering only three possible lengths for the intervals determining the partition. We call 
such sequences $LMS$-sequences.

\begin{example}\label{ex:LMS}
Consider the $LMS$-sequence of partitions $(\rho^n_{L,M,S} \omega)_{n\in\mathbb{N}}$ defined by the equation $\beta^3+\beta^2+2\beta-1=0$. 
In the following we describe the sequence of partitions and the associated sequence of points obtained by the procedure introduced above.
\begin{figure}[h!]
\begin{center}
\begin{tikzpicture}[scale=12.5]
\draw (0, 0) -- (1,0);
\draw (0,-0.01) node[below, black]{\scriptsize 0} -- (0,0.01);
\draw (1,-0.01) node[below, black]{\scriptsize 1} -- (1,0.01);
\draw (0.392647,-0.01) node[below, black]{\scriptsize$\beta$}-- (0.392647,0.01);
\draw (0.939465,-0.01) node[below, black]{\scriptsize$2\beta+\beta^2$}-- (0.939465,0.01);
\draw (0.785294,-0.01) node[below, black]{\scriptsize$2\beta$}-- (0.785294,0.01);

\end{tikzpicture}
\end{center}
\end{figure}

\begin{figure}[h!]
\begin{center}
\begin{tikzpicture}[scale=12.5]
\draw (0, 0) -- (1,0);
\draw (0,-0.01) node[below, black]{\scriptsize 0} -- (0,0.01);
\draw (1,-0.01) node[below, black]{\scriptsize 1} -- (1,0.01);
\draw (0.392647,-0.01) node[right, below, black]{\scriptsize$\beta$}-- (0.392647,0.01);
\draw (0.785294,-0.01) node[right, below, black]{\scriptsize$2\beta$}-- (0.785294,0.01);
\draw (0.939465,-0.01) node[below, black]{\scriptsize$2\beta+\beta^2$}-- (0.939465,0.01);
\draw (0.546818,-0.01) node[right, below, black]{\scriptsize$\beta+\beta^2$}-- (0.546818,0.01);
\draw (0.154171,-0.01) node[left, below, black]{\scriptsize$\beta^2$}-- (0.154171,0.01);
\draw (0.308342,-0.01) node[left, below, black]{\scriptsize$2\beta^2$}-- (0.308342,0.01);
\draw (0.700989,-0.01) node[left, below, black]{\scriptsize$\beta+2\beta^2$}-- (0.700989,0.01);
\draw (0.368877,-0.01) -- (0.368877,0.01);
\draw[->] (0.368877,-0.07) node[below, black]{\scriptsize$2\beta^2+\beta^3$}-- (0.368877,-0.02);
\draw (0.761524,-0.01)-- (0.761524,0.01);
\draw[->] (0.761524,-0.07) node[below, black]{\scriptsize$\beta+2\beta^2+\beta^3$}-- (0.761524,-0.02);
\draw (0.060535,-0.01) node[right, below, black]{\scriptsize$\beta^3$}-- (0.060535,0.01);
\end{tikzpicture}
\end{center}
\end{figure}
The sequence of partitions is obtained by splitting at each step the longest interval into $2$ long intervals
followed by an interval of medium length and by  a short interval. So at the first partition we split the 
unit interval into two intervals of length $\beta$, followed by one of length $\beta^2$ and one of length $\beta^3$.
At the second step we split only the first two intervals proportionally into two intervals of length $\beta^2$, one
of lenght $\beta^3$ and one of length $\beta^4$, respectively. The procedure goes on in this way as
for the classical $LS$-sequences.

The associated sequence of points is the sequence of left endpoints of the intervals determining the sequence of 
partitions, where the order is determined by Definition~\ref{seq_of_point}.

Let us list the digit expansion of the first 10 positive integers and the corresponding points of the sequence.

\begin{center}
\begin{tabular}{rlcl}

$\Phi(1)=$
&
$((1,0))$ & $\rightarrow$
&
$\beta$\\
$\Phi(2)=$
&
$((1,1))$ & $\rightarrow$
&
$2\beta$
\\$\Phi(3)=$
&
$((1,2))$ & $\rightarrow$
&
$2\beta+\beta^2$
\\$\Phi(4)=$
&
$((1,0),(0,0))$ & $\rightarrow$
&
$\beta^2$
\\$\Phi(5)=$
&
$((1,0),(1,0))$ & $\rightarrow$
&
$\beta+\beta^2$
\\$\Phi(6)=$
&
$((1,1),(0,0))$ & $\rightarrow$
&
$2\beta^2$
\\$\Phi(7)=$
&
$((1,1),(1,0))$ & $\rightarrow$
&
$\beta+2\beta^2$
\\$\Phi(8)=$
&
$((1,2),(0,0))$ & $\rightarrow$
&
$2\beta^2+\beta^3$\\
$\Phi(9)=$
&
$((1,2),(1,0))$ & $\rightarrow$
&
$\beta+2\beta^2+\beta^3$
\\$\Phi(10)=$
&
$((1,0),(0,0),(0,0))$ & $\rightarrow$&
$\beta^3$
\end{tabular}
\end{center}
\end{example}

\begin{remark}
Let us point out the connection of the generalized $LS$-sequences to the classical $LS$-sequences and van der Corput sequences:
 \begin{description}
  \item[The case $k=1$] In the case $k=1$ we have $\beta=1/L_1$ and the numeration introduced above is the usual $L_1$-adic numeration.
  Indeed, let us note that $l_n=t_n=T_n=L_1^n$ and in particular we obtain
  $$N=\sum_{i=0}^{n} (\epsilon_i T_i+\eta_il_{i,1})=\sum_{i=0}^{n} d_i L_1^n, $$
  where $d_i=\epsilon_i+\eta_i\in\{0,1,\ldots,L_1-1\}$. A close look on Definition \ref{seq_of_point} reveals that in this case the generalized
  $LS$-sequence coincides with the van der Corput sequence.
  \item[The case $k=2$] In this case the generalized $LS$-sequence coincides with the classical $LS$-sequence. Let us note that
  in the case $k=2$ we have that $T_n=l_{n,1}+l_{n,2}=t_n$, i.e.
  $$N=\sum_{i=0}^{n} (\epsilon_i T_i+\eta_il_{i,1})=\sum_{i=0}^{n} (\epsilon_i t_i+\eta_il_{i,1})$$
  which correspondes to the numeration system introduced by Aistleitner et.al. \cite{ahz}. 
 \end{description}
\end{remark}

In order to give an estimate for the discrepancy, it is necessary to introduce the notion of elementary intervals.
An interval is called \emph{elementary} if it is an element of $\rho^n_{L_1,\dots, L_k} \omega$ for some $n$. Equivalently we can define elementary intervals 
as all intervals of the form $I_x^{(m)}=[\xi_{L_1,\dots, L_k}^x,\xi_{L_1,\dots, L_k}^x+\beta^m)$ for some
$m$, where 
$$\Phi(x+l_{m-1,1})=((1,\eta_{m-1}),\dots,(\epsilon_0,\eta_0))$$ with $\eta_{m-1}<L_1$. In particular,
there exists an integer $y<t_m$ such that
$\xi_{L_1,\dots, L_k}^x+\beta^m=\xi_{L_1,\dots, L_k}^y$. Obviously we may choose $y=x+l_{m-1,1}<t_{m}$. 

The next step consists in finding a method to decide
whether a point $\xi_{L_1,\dots, L_k}^N$ is contained in some given elementary interval or
not.

\begin{lemma} \label{Lem:Carbone}
Let $I_x^{(m)}=[\xi_{L_1,\dots, L_k}^x,\xi_{L_1,\dots, L_k}^x+\beta^m)$ be an
elementary interval. Then $\xi_{L_1,\dots, 
L_k}^N \in I_x^{(m)}$ if and only if 
\[x=\sum_{i=0}^{m-1} (\epsilon_iT_i+\eta_il_{i,1})\]
is the truncated representation of $N$.

In addition let $A_x^{(m)}(N)=\sharp \{l\: :\: l\leq N, \xi_{L_1,\dots, L_k}^l \in
I_x^{(m)}\}$ and assume that
\[N=x+\sum_{i=m}^{n}(\epsilon_i T_{i}+\eta_i l_{i,1}).\]
Then 
\[A_x^{(m)}(N)= \sum_{i=0}^{n-m}(\epsilon_{i+m} T_{i}+\eta_{i+m} l_{i,1}) +1.\]
\end{lemma}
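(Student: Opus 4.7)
The plan is to establish Part 1 by a direct decomposition of $\xi^N$ via the digit expansion of $N$, and to derive Part 2 as a counting consequence; I abbreviate $\xi^N := \xi_{L_1,\dots,L_k}^N$ below. Writing $\Phi(N)=((\epsilon_n,\eta_n),\dots,(\epsilon_0,\eta_0))$ and letting $f_i(\epsilon_i,\eta_i)$ denote the $i$-th summand in Definition~\ref{seq_of_point}, one has $\xi^N=\sum_{i=0}^n f_i(\epsilon_i,\eta_i)$, where each $f_i$ is non-negative and satisfies $f_i(\epsilon,\eta)=\beta^i g(\epsilon,\eta)$ for a function $g$ independent of $i$. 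The truncated digit sequence $((\epsilon_{m-1},\eta_{m-1}),\dots,(\epsilon_0,\eta_0))$ still satisfies all conditions of Lemma~\ref{Lem:Digits} and therefore represents a unique integer, which I take as $x$. Since $\xi^x=\sum_{i=0}^{m-1}f_i(\epsilon_i,\eta_i)$ by the same formula (note that Definition~\ref{seq_of_point} depends only on the digits, not on the $T_i$'s), one obtains
\[
\xi^N-\xi^x=\sum_{i=m}^n f_i(\epsilon_i,\eta_i)=\beta^m\sum_{j=0}^{n-m}f_j(\epsilon_{j+m},\eta_{j+m})=\beta^m\xi^M,
\]
where $M$ is the integer whose $\Phi$-representation is the shifted tail $(\epsilon_n,\eta_n),\dots,(\epsilon_m,\eta_m)$ (the shift preserves all the constraints of Lemma~\ref{Lem:Digits}).

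Non-negativity gives $\xi^N\geq\xi^x$, and the upper bound $\xi^N<\xi^x+\beta^m$ reduces to the auxiliary claim that $\xi^M\in[0,1)$ for every integer $M$. I would prove this by induction on the length of the representation, exploiting the restriction $\eta_i\geq L_1+\dots+L_j-1\Rightarrow\epsilon_{i+j}=0$ together with the identity $L_1\beta+\dots+L_k\beta^k=1$ to telescope the maximal possible contributions. This establishes $\xi^N\in I_x^{(m)}$; the converse direction follows from the bijectivity of $\Phi$ in Lemma~\ref{Lem:Digits}, since two distinct truncations must yield $\xi$-values lying in disjoint elementary intervals.

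For Part 2, by Part 1 the integers $l$ satisfying $\xi^l\in I_x^{(m)}$ are exactly those whose $\Phi$-representation agrees with $x$'s in the first $m$ positions. Such $l$ are parametrised by their shifted high-order tails, which by Lemma~\ref{Lem:Digits} are in bijection with non-negative integers $M'$; the condition $l\leq N$ translates into $M'\leq M$, where $M=\sum_{j=0}^{n-m}(\epsilon_{j+m}T_j+\eta_{j+m}l_{j,1})$ is $\Psi$ applied to the shifted tail of $N$. Counting yields $A_x^{(m)}(N)=M+1$, the ``$+1$'' accounting for $M'=0$, i.e.\ $l=x$. The main obstacle I anticipate is the auxiliary claim $\xi^M<1$ needed in Part 1, since a naive bound summing maximum digit contributions already exceeds $1$ and one genuinely needs the interplay between the restrictions of Lemma~\ref{Lem:Digits} and the identity $L_1\beta+\dots+L_k\beta^k=1$. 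A secondary subtlety in Part 2 is to verify that cross-constraints linking low-order $\eta_i$'s of $x$ to high-order $\epsilon_{i+j}$'s of $l$ do not further restrict the bijection with $\{0,1,\dots,M\}$; here the elementary interval hypothesis $\eta_{m-1}<L_1$ enters.
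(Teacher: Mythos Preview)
Your approach is essentially the one the paper has in mind (it defers the details to \cite[Lemma~4]{ahz}): split $\xi^N$ into a prefix part matching $\xi^x$ and a tail part $\beta^m\xi^M$, then count tails for Part~2. You have also correctly isolated the two genuine technical points---the bound $\xi^M<1$ and the cross--constraints between $\eta_i$ with $i<m$ and $\epsilon_{i+j}$ with $i+j\ge m$---and you are right that the latter is precisely where the elementary-interval hypothesis on $\Phi(x+l_{m-1,1})$ is used (this is exactly what the paper's short proof singles out).

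There is, however, one step you pass over too quickly. In Part~2 you assert that ``$l\le N$ translates into $M'\le M$''. This is \emph{not} the statement that $l-x=M'$: indeed the prefix contribution $\sum_{i<m}\epsilon_iT_i$ to $l$ depends on the tail through the $T_i$'s (since $T_{m-1}$, and possibly lower, is determined by how many leading $\epsilon$'s of the tail vanish), so in general $l-x\ne\Psi(\text{shifted tail})$. What you actually need is that the two maps $\text{tail}\mapsto l$ and $\text{tail}\mapsto M'$ induce the \emph{same} linear order on tails. This is true, but it requires the observation that $\Phi$ is order-preserving for the lexicographic order on digit strings (highest position first): from the greedy nature of the algorithm one checks by induction that $N<N'$ forces $\Phi(N)<_{\mathrm{lex}}\Phi(N')$. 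Once this is established, $l<l'$ with common prefix is equivalent to $\text{tail}(l)<_{\mathrm{lex}}\text{tail}(l')$, which in turn (applying the same order-preservation to the shifted tails) is equivalent to $M'<M''$. You should insert this short argument; Lemma~\ref{Lem:Digits} as stated gives only bijectivity, not monotonicity.
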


\begin{proof}
As soon as we have shown that all integers $N$ of the form 
\[N=x+\sum_{i=m}^{n}(\epsilon_i T_{i}+\eta_i l_{i,1})\]
have the same first $m$ digits as $\Phi(x)$ we can proceed as in the proof of \cite[Lemma~4]{ahz}. But, that all those $N$ have the same first $m$
digits is evident from the constraints on $I_x^{(m)}$ to be an elementary interval, i.e. $x$ is such that 
$$\Phi(x+l_{m-1,1})=((1,\eta_{m-1}),\dots,(\epsilon_0,\eta_0))$$
with $\eta_{m-1}\leq L_1-1$, and Lemma \ref{Lem:Digits}.
\end{proof}

Next we are interested in an accurate formula for $\frac{A_x^{(m)}(N)}N$,
where $\xi_{L_1,\dots, L_k}^N \in I_x^{(m)}$.

\begin{lemma}\label{Lem:Discrepancy-1-dim}
Let us assume that $N$ has a representation of the form \eqref{Rep:N}, and assume that
$\xi_{L_1,\dots, L_k}^N \in I_x^{(m)}$. Then we have
\begin{equation}\label{discr_general}
\frac{A_x^{(m)}(N)}N =\beta^m+\frac{R}{N}\ ,
\end{equation}
where
$$R\leq 1+|\lambda_{1,0}|+\sum_{j=2}^k (2\Lambda_j +|\lambda_{j,0}|).$$
and 
$$\Lambda_j=\max_{\ell=2,\dots,k}\left\{ \frac{|\sum_{i=1}^\ell |\lambda_{j,i}|+(L_1+\dots+L_k-2)|\lambda_{j,1}|}{1-|\beta_j|^{-1}}\right\}.$$
\end{lemma}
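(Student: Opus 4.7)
The plan is to compute $R=A_x^{(m)}(N)-N\beta^m$ directly by exploiting the closed form of Lemma~\ref{lem:Rec}. Using Lemma~\ref{Lem:Carbone} together with the decomposition $N=x+\sum_{j=m}^n(\epsilon_j T_j+\eta_j l_{j,1})$, I would split
\[
R = 1 - \beta^m x + \sum_{j=m}^n \epsilon_j\bigl(\widetilde{T}_{j-m}-\beta^m T_j\bigr) + \sum_{j=m}^n \eta_j\bigl(l_{j-m,1}-\beta^m l_{j,1}\bigr),
\]
where $\widetilde{T}_i$ denotes the $T$-value that the algorithm assigns to the shifted digit string $(\epsilon_n,\eta_n),\ldots,(\epsilon_m,\eta_m)$. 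A short verification shows $\widetilde{T}_i=l_{i,1}+\cdots+l_{i,s_i}$ and $T_{i+m}=l_{i+m,1}+\cdots+l_{i+m,s_i}$ for the \emph{same} $s_i\in\{2,\ldots,k\}$ (with the convention $s_{n-m}=k$): the integer $j+2$ entering $T_m$ in the algorithm depends only on the pattern of zeros among the $\epsilon$-digits immediately above index $m$, and this pattern is preserved by the shift.

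The key algebraic input is the cancellation of the Pisot-root contribution. Substituting $l_{n,i}=\sum_{r=1}^k\lambda_{r,i}\beta_r^{-n}$ from Lemma~\ref{lem:Rec}, the $r=1$ term in $l_{j-m,i}$ and in $\beta^m l_{j,i}$ both equal $\lambda_{1,i}\beta^{m-j}$ and cancel, leaving
\[
l_{j-m,i}-\beta^m l_{j,i}=\sum_{r=2}^k \lambda_{r,i}\beta_r^{-j}\bigl(\beta_r^m-\beta^m\bigr).
\]
Summing this identity over $i=1,\ldots,s_j$ handles the $T$-difference as well. Since $\sum_{i=1}^k\lambda_{r,i}=\lambda_{r,0}$, the extreme case $s_{n-m}=k$ naturally produces the coefficient $\lambda_{r,0}$ and is the source of the $\sum|\lambda_{r,0}|$ appearing in the stated bound.

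It remains to estimate three pieces. Since $x<t_m$ by Lemma~\ref{Lem:Digits}, and $\beta^m t_m=\lambda_{1,0}+\sum_{r\geq 2}\lambda_{r,0}(\beta/\beta_r)^m$ with $|\beta/\beta_r|<1$, one has $\beta^m x\leq|\lambda_{1,0}|+\sum_{r\geq 2}|\lambda_{r,0}|$. For each $r\geq 2$, the triangle inequality gives $|\beta_r^{-j}(\beta_r^m-\beta^m)|\leq|\beta_r|^{m-j}+\beta^m|\beta_r|^{-j}$, and summing these two geometric series over $j=m,\ldots,n$ bounds the total by $2/(1-|\beta_r|^{-1})$. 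Using $\epsilon_j\leq 1$, $\eta_j\leq L_1+\cdots+L_k-2$, and taking the maximum over the finitely many possible partial sums $\sum_{i=1}^{s_j}\lambda_{r,i}$ to match the definition of $\Lambda_r$, the two $j$-sums together contribute $2\sum_{r=2}^k\Lambda_r$. Combining everything with the initial constant $1$ yields the claimed bound.

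The main obstacle is the bookkeeping of the shifted $T$-values in the first step: once the matching of the $s_i$-structure between the original and shifted representations is established, the Pisot-root cancellation and the subsequent geometric-series estimates are essentially mechanical.
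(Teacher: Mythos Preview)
Your proposal is correct and follows essentially the same route as the paper: you invoke Lemma~\ref{Lem:Carbone}, match the partial-sum structure $s_i$ of $T_i$ and $\widetilde T_{i-m}$, plug in the closed form from Lemma~\ref{lem:Rec} to cancel the $\beta_1=\beta$ contribution, bound $x\beta^m$ via $x<t_m$, and finish with a geometric-series estimate on the conjugate roots. The only cosmetic difference is that the paper first packages the conjugate-root sum as $\tilde R_r=\sum_{i=m}^n(\epsilon_i\Lambda_{r,i}+\eta_i\lambda_{r,1})\beta_r^{-(i-m)}$, bounds $|\tilde R_r|\leq\Lambda_r$ by one geometric series, and then obtains the factor $2$ from $|1-(\beta/\beta_r)^m|\leq 2$, whereas you distribute and sum the two geometric series $\sum|\beta_r|^{m-j}$ and $\beta^m\sum|\beta_r|^{-j}$ separately; both routes land on the same $2\Lambda_r$.
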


Remind that we assume that the polynomial $L_kX^k+\dots+L_1X -1$ has no double roots and has only one positive
root $\beta<1$.

\begin{proof}
 Using our assumptions and Lemma~\ref{Lem:Carbone} we can calculate the exact values of $A_x^{(m)}(N)$ and $N$. In fact, we have
\[N=x + \sum_{i=m}^{n}(\epsilon_i T_{i}+\eta_i l_{i,1})\quad \text{and}
\quad A_x^{(m)}(N)=\sum_{i=m}^{n}(\epsilon_i \tilde T_{i-m}+\eta_i l_{i-m,1}) +1,\]
where $\tilde T_{i-m}=l_{i-m,1}+\dots+l_{i-m,j}$ such that $j$ is the integer such that
$\epsilon_{i+1}=\dots=\epsilon_{i+j}=0$ and $\epsilon_{i+j+1}\neq 0$. In particular, if we write
$$T_i=\sum_{j=1}^k \beta_j^{-i} \Lambda_{j,i},$$
then we have
$$\tilde T_{i-m}=\sum_{j=1}^k \beta_j^{-i+m} \Lambda_{j,i},$$
where $\Lambda_{j,i}=\lambda_{j,1}+\dots +\lambda_{j,\ell_i}$ for some integer $2\leq \ell_i\leq k$ depending on $i$.
This yields
\begin{align*}
 \frac{A_x^{(m)}(N)}N =&\frac{\sum_{i=m}^{n}(\epsilon_i \tilde T_{i-m}+\eta_i l_{i-m,1}) +1}{\sum_{i=m}^{n}(\epsilon_i T_{i}+\eta_i l_{i,1}) +x}\\
=& \frac{\sum_{i=m}^{n}(\epsilon_i \Lambda_{1,i}+\eta_i \lambda_{1,1})\beta^{-i+m}+
\sum_{j=2}^k\tilde R_j+1}
{\beta^{-m}\sum_{i=m}^{n}(\epsilon_i \Lambda_{1,i}+\eta_i \lambda_{1,1})\beta^{-i+m}+
\sum_{j=2}^k \tilde R_j \beta_j^{-m} +x},
\end{align*}
where
$$\tilde R_j=\sum_{i=m}^{n}(\epsilon_i\Lambda_{j,i}+\eta_i \lambda_{j,1})\beta_j^{-i+m}\leq \Lambda_j$$
for $j=2,\dots,k$.
Further, note that
$$N=\beta^{-m}\sum_{i=m}^{n}(\epsilon_i \Lambda_{1,i}+\eta_i \lambda_{1,i})\beta^{-i+m}+\sum_{j=2}^k \tilde R_j \beta_j^{-m} +x $$
Therefore we obtain
\begin{align*}
 \frac{A_x^{(m)}(N)}N=&\beta^m+\frac{\sum_{j=2}^k \tilde R_j+1}N+\frac{\sum_{i=m}^n (\epsilon_i \Lambda_{1,i}+\eta_i \lambda_{1,1})\beta^{-i+m}-N\beta^m}N\\
 =&\beta^m+\frac{\sum_{j=2}^k \tilde R_j+1}N+\frac{\sum_{i=m}^n (\epsilon_i \Lambda_{1,i}+\eta_i \lambda_{1,1})\beta^{-i+m}}N\\
 &-\frac{\sum_{i=m}^{n}(\epsilon_i \Lambda_{1,i}+\eta_i \lambda_{1,1})\beta^{-i+m}+\sum_{j=2}^k \tilde R_j \left(\frac{\beta}{\beta_j}\right)^m+x\beta^m}N\\
=&\beta^m+\frac{1+\sum_{j=2}^k \tilde R_j\left(1-\left(\frac{\beta}{\beta_j}\right)^m\right)-x\beta^m}N.
\end{align*}

Finally we want to estimate $x\beta^m$. Since we assume that $\Psi^{-1}(x+l_{m-1,1})=((1,\eta_{m-1}),\dots,(\epsilon_0,\eta_0))$,
with $\eta_{m-1}\leq L_1-1$ we obviously have that 
$$x<t_{m-1}+l_{m-1,1}(L_1-1)<t_m,$$
and therefore 
$$
x\beta^m\leq  |\lambda_{1,0}|+\sum_{j=2}^k |\lambda_{j,0}|\left|\frac{\beta}{\beta_j}\right|^m
\leq  \sum_{j=1}^k |\lambda_{j,0}|.
$$

If we put all our results together and note that $|1-(\beta/\beta_j)^m|\leq 2$ for each positive integer $m$ we obtain the statement of the Lemma.
\end{proof}

\begin{remark}\label{rem:why_Pisot}
We note that the only place, where we used the fact that the polynomial $L_kX^k+\dots+L_1X-1$ has no double zeros and all roots but one have absolute value
smaller than $1$ is in the proof of Lemma \ref{Lem:Discrepancy-1-dim}. Let us note that dropping the assumption that there exists no double zeros would not 
change the result. But, it would result in a slightly worse estimate for $R$ and more technical difficulties in the course of the proof. For the sake of simplicity we stick with 
the case of simple zeros. 

Further let us note that assuming that all roots but one have absolute value $\leq 1$ would result in an estimate of the form
$$|R|\leq (\log N)^A C_{L_1,\dots,L_k}$$
where $A$ is the number of roots with absolute value $=1$ counted with multiplicities and $C_{L_1,\dots,L_k}$ is some constant depending on the $k$-tuple 
$(L_1,\dots,L_k)$. Plugging this bound into the proof of Theorem \ref{Th:Discrepancy} we would obtain
$$D_N(\xi_{L_1,\dots,L_k}^{1}, \xi_{L_1,\dots,L_k}^{2},\ldots, \xi_{L_1,\dots,L_k}^{N})\leq C_{L_1,\dots,L_K}\frac{(\log N)^{A+1}}N $$

In case that $L_kX^k+\dots+L_1X-1$ has more than one root with absolute value $>1$ the bound of the discrepancy 
would be only of the order $N^{-B}$ for some $B<1$. Let us note that similar observations have been made by Carbone \cite{carbone} in the case
that $k=2$.
\end{remark}


In the next section we compute the discrepancy of classical $LS$-sequences. To do so we need the following Lemma proved in \cite[Lemma 6]{ahz} which is more 
precise than Lemma \ref{Lem:Discrepancy-1-dim}.

\begin{lemma}\label{Lem:Discrepancy-1-dim-classical}
Assume that $N$ has a representation of the form \eqref{Rep:N}, and assume that $\xi_{L,S}^N \in I_x^{(m)}$. Then we have
\begin{equation}\label{discr} \frac{A_x^{(m)}(N)}N =\beta^m+\frac{R(1-(-S\beta)^m)+1-x\beta^m}N,\end{equation}
where
\[R=\sum_{i=m}^{n}(\epsilon_i \tau_1+\eta_i \lambda_1)(-S\beta)^{i-m},\]
with
\begin{equation*}
 \tau_1=\frac{-L-2S+\sqrt{L^2+4S}}{2\sqrt{L^2+4S}}\qquad \lambda_1=\frac{-L+\sqrt{L^2+4S}}{2\sqrt{L^2+4S}}\ .
\end{equation*}

Moreover $R$ can be estimated by

\[|R|<\max\{|\tau_1|,|\tau_1+(L+S-2)\lambda_1|\}\frac{1-(S\beta)^{n-m+1}}{1-S\beta}.\]
if $S\beta\neq 1$ and 
\[|R|<\max\{|\tau_1|,|\tau_1+(L+S-2)\lambda_1|\}\max\{n-m+1,0\}\]
if $S\beta=1$.
\end{lemma}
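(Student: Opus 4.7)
The plan is to mimic the structure of the proof of Lemma \ref{Lem:Discrepancy-1-dim} but exploit the simplifications available when $k=2$. Since $l_{i,\ell}=0$ for $\ell\geq 3$, the quantity $T_i=l_{i,1}+\dots+l_{i,j+2}$ from the numeration algorithm always reduces to $t_i=l_{i,1}+l_{i,2}$ regardless of the gap $j$; the shifted counterpart $\tilde T_{i-m}$ reduces to $t_{i-m}$ for the same reason. Consequently Lemma \ref{Lem:Carbone} gives the clean identity
\begin{equation*}
A_x^{(m)}(N)-1=\sum_{i=m}^{n}\bigl(\epsilon_i\,t_{i-m}+\eta_i\,l_{i-m,1}\bigr),
\end{equation*}
without the index-dependent truncation of the Binet-type expansion that complicated the general case.

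Next I would specialize Lemma \ref{lem:Rec} to $k=2$ and write $t_j=\lambda_{1,0}\beta^{-j}+\tau_1\beta_2^{-j}$ and $l_{j,1}=\lambda_{1,1}\beta^{-j}+\lambda_1\beta_2^{-j}$, where $\beta_2=-1/(S\beta)$ is the conjugate root, so $\beta_2^{-1}=-S\beta$. The four constants can be obtained either from formula \eqref{eq:lambdas} or, more economically, from the initial conditions $t_0=1$, $t_1=L+S$, $l_{0,1}=1$, $l_{1,1}=L$ by solving a $2\times 2$ linear system; this step produces precisely the explicit expressions for $\tau_1$ and $\lambda_1$ appearing in the statement.

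Substituting these closed forms into the formula for $A_x^{(m)}(N)-1$ and into the representation \eqref{Rep:N} of $N-x$, then collecting the terms associated with $\beta^{-i}$ into a single quantity $P$ and those associated with $\beta_2^{-i}$ into $R$ (after pulling out the common factor $\beta_2^m$ that rewrites the exponent $-i$ as $-(i-m)$ and thus produces the $(-S\beta)^{i-m}$ appearing in the definition of $R$), yields the paired identities
\begin{equation*}
A_x^{(m)}(N)-1=\beta^m P+R\qquad\text{and}\qquad N-x=P+(-S\beta)^m R.
\end{equation*}
Eliminating $P$ between them and dividing by $N$ produces the claimed formula \eqref{discr}. \textbf{The main technical point} is keeping the bookkeeping straight on the $m$-th power factor multiplying $R$, since one must carefully distinguish between $\beta^m$, $\beta_2^m$, $(-S\beta)^m$ and their various products.

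For the estimate on $R$, observe that for each index $i$ the coefficient $\epsilon_i\tau_1+\eta_i\lambda_1$ vanishes when $\epsilon_i=0$ (because then $\eta_i=0$ as well), while for $\epsilon_i=1$ it is an affine function of $\eta_i$ on the finite set $\{0,1,\dots,L+S-2\}$; the modulus of such a function attains its supremum at an endpoint, so every coefficient is bounded in absolute value by $\max\{|\tau_1|,|\tau_1+(L+S-2)\lambda_1|\}$. Applying the triangle inequality and summing the geometric series $\sum_{j=0}^{n-m}(S\beta)^j$ then delivers the stated bound, with the degenerate case $S\beta=1$ treated separately since the geometric sum there reduces to the simple count $n-m+1$.
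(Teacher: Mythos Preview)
The paper does not actually prove this lemma: it is quoted verbatim from \cite[Lemma~6]{ahz}, so there is no in-paper argument to compare against. Your plan---specialize the computation behind Lemma~\ref{Lem:Discrepancy-1-dim} to $k=2$, where $T_i$ and $\tilde T_{i-m}$ collapse to $t_i$ and $t_{i-m}$---is exactly the natural derivation and matches in spirit what the general lemma does. The determination of $\tau_1,\lambda_1$ from the initial data and the endpoint argument for bounding $|\epsilon_i\tau_1+\eta_i\lambda_1|$ are both correct.

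One point you glossed over deserves attention. Carrying out your elimination from
\[
A_x^{(m)}(N)-1=\beta^mP+R,\qquad N-x=P+(-S\beta)^mR
\]
gives
\[
A_x^{(m)}(N)=\beta^mN+R\bigl(1-(-S\beta^2)^m\bigr)+1-x\beta^m,
\]
because the cross term is $\beta^m\cdot(-S\beta)^m=(-S\beta^2)^m=(\beta/\beta_2)^m$, in agreement with the factor $1-(\beta/\beta_j)^m$ that appears in the proof of Lemma~\ref{Lem:Discrepancy-1-dim}. The displayed formula \eqref{discr} has $(-S\beta)^m$ rather than $(-S\beta^2)^m$; this looks like a transcription slip in the statement (note that the paper itself writes $\lambda_1(-S\beta^2)^m$ a few lines later when bounding $1-x\beta^m$ in the proof of Theorem~\ref{constant}). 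Your method is sound; it simply reproduces the version consistent with Lemma~\ref{Lem:Discrepancy-1-dim}, and the subsequent discrepancy estimates are unaffected since $|{-}S\beta^2|<|{-}S\beta|<1$ under the hypothesis $L\geq S$.
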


\section{Discrepancy bounds for classical $LS$-sequences}\label{Sec:ProofTh1}

This section is devoted to the explicit computation of the discrepancy of classical $LS$-sequences with the aim proving 

\begin{theorem}\label{constant}
Let $(\xi_{L,S}^n)_{n\in\mathbb{N}}$ be a classical-$LS$-sequence of points with $L\geq S$. Then
\begin{equation}
D_N(\xi_{L,S}^{1}, \xi_{L,S}^{2},\ldots, \xi_{L,S}^{N})\leq \frac{\log N}{N|\log \beta|} (2L+S-2)\left(\frac{\tilde R}{1-S\beta}+1\right)+\frac BN ,
\end{equation}
with $\tilde{R}=\max\{|\tau_1|,|\tau_1+(L+S-2)\lambda_1|\}$ and
$$B= (2L+S-2)\left(\frac{\tilde R}{1-S\beta}+1\right)+2$$ 
\end{theorem}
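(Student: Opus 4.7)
The strategy is to reduce the estimation of $D_N$ to a sum of errors over elementary intervals. Writing any $[a,b)\subseteq[0,1)$ as $[0,b)\setminus[0,a)$, it suffices to control $|A_{[0,a)}(N)-Na|$ uniformly in $a\in[0,1]$, so I would work with the star-discrepancy formulation and absorb the factor coming from the two-sided conversion into the additive constant $B$.

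First, fix $n$ by $t_n\leq N<t_{n+1}$. By Lemma \ref{lem:Rec} we have $t_n\sim \lambda_{1,0}\beta^{-n}$, hence $n\leq \log N/|\log\beta|+O(1)$; this is where the factor $\log N/|\log\beta|$ in the bound comes from. Next, for a given $a\in[0,1]$, I would approximate $a$ by the nearest left endpoint $\xi_{L,S}^y$ at the $(n{+}1)$-th level, which introduces at most $O(1/N)$ error (absorbed into $B$), and then decompose
$$[0,\xi_{L,S}^y)=\bigsqcup_j I_{x_j}^{(m_j)}$$
as a disjoint union of elementary intervals read off the digit expansion $\Phi(y)=((\epsilon_n,\eta_n),\dots,(\epsilon_0,\eta_0))$ provided by Lemma \ref{Lem:Digits}. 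The key combinatorial input is that each digit level $i$ contributes at most $2L+S-2$ elementary intervals to this decomposition: the summand $\eta_i l_{i,1}$ adds $\eta_i\leq L+S-2$ intervals of length $\beta^{i+1}$, while the summand $\epsilon_i T_i=\epsilon_i(l_{i,1}+l_{i,2})$ contributes at most $L$ additional intervals corresponding to the long/short refinement at that scale.

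Then I would apply Lemma \ref{Lem:Discrepancy-1-dim-classical} to each elementary interval $I_{x_j}^{(m_j)}$: the bound $|R|\leq \tilde R/(1-S\beta)$ (valid because $L\geq S$ forces $S\beta<1$, a standard fact for classical $LS$-sequences) together with $|1-(-S\beta)^{m_j}|\leq 1+(S\beta)^{m_j}$ and the estimate $x_j\beta^{m_j}\leq 1$ shows that the error contribution from each elementary interval is at most $(\tilde R/(1-S\beta)+1)/N$. Summing over the at most $(2L+S-2)n$ elementary intervals in the decomposition and adding the $O(1/N)$ truncation error yields
$$|A_{[0,a)}(N)-Na|\leq \frac{(2L+S-2)\,n}{N}\left(\frac{\tilde R}{1-S\beta}+1\right)+\frac{B}{N},$$
and substituting $n\leq \log N/|\log\beta|$ gives the announced bound, the constant $B$ collecting the truncation error together with the $+2$ coming from the star-versus-two-sided passage.

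The main obstacle I expect is the combinatorial bookkeeping in the middle step: one must verify that the digit-by-digit splitting of $[0,\xi_{L,S}^y)$ really is disjoint, that it covers $[0,\xi_{L,S}^y)$ exactly, and that every piece produced is of the admissible form $I_x^{(m)}$ with $\eta_{m-1}<L$, so that Lemma \ref{Lem:Discrepancy-1-dim-classical} may be legitimately applied to each piece and the sharp count of $2L+S-2$ intervals per level is attained. Once this accounting is fixed, the remainder is a direct summation and a careful tally of the additive constants that are bundled into $B$.
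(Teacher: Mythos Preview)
Your overall architecture---cover by elementary intervals, apply Lemma~\ref{Lem:Discrepancy-1-dim-classical} to each, and sum---is the same as the paper's. The genuine gap is in the reduction to $[0,a)$ and the interval count.

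First, the passage from $D_N^*$ to $D_N$ via $[a,b)=[0,b)\setminus[0,a)$ gives
\[
\bigl|A_{[a,b)}(N)-N(b-a)\bigr|\le \bigl|A_{[0,b)}(N)-Nb\bigr|+\bigl|A_{[0,a)}(N)-Na\bigr|,
\]
which is a \emph{multiplicative} factor $2$ on the main term, not an additive $+2$. It cannot be absorbed into $B$. So if your one-sided bound were $(2L+S-2)n(\tilde R/(1-S\beta)+1)/N$, the two-sided bound would carry the factor $2(2L+S-2)$, which is not what the theorem asserts.

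Second, your count of $2L+S-2$ pieces per level for the one-sided decomposition of $[0,\xi_{L,S}^y)$ is not justified. The argument you give mixes the integer expansion $y=\sum(\epsilon_iT_i+\eta_il_{i,1})$ with an interval decomposition; the summands $\eta_il_{i,1}$ and $\epsilon_iT_i$ do not translate into $\eta_i$ and $L$ elementary intervals at a single scale. If one reads the actual point formula in Definition~\ref{seq_of_point} for $k=2$, level $i$ contributes at most $L$ intervals of length $\beta^{i+1}$ and at most $S-1$ of length $\beta^{i+2}$, so each fixed length $\beta^m$ receives at most $L+S-1$ pieces. Doubling this for the two-sided passage yields $2L+2S-2$, strictly worse than $2L+S-2$ for $S\ge1$.

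The paper avoids both issues by working directly with an arbitrary $[x,y)$ and covering $[x',y')$ from \emph{both} ends simultaneously: from $x'$ one moves right across at most $L-1$ long intervals and $S$ short ones to reach a coarser grid point $x_1'$, while from $y'$ one moves left across at most $L-1$ long intervals to reach $y_1'$. This asymmetry (the $S$ short intervals appear only on one side) is exactly what produces $2(L-1)+S=2L+S-2$ per scale, and no star-to-two-sided doubling is needed. To recover the stated constant you should implement this two-ended covering rather than the digit-expansion splitting of $[0,a)$.
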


\begin{proof}
Let $\xi_{L,S}^{1}, \xi_{L,S}^{2},\ldots, \xi_{L,S}^{N}$ be the first $N$ points of the sequence $(\xi_{L,S}^n)_{n\in\mathbb{N}}$
and take $n=\max\{k :t_k\leq N< t_{k+1}\}$. Consider an arbitrary subinterval $[x,y)$ in $[0,1[$. We want to estimate the number of
points among $\xi_{L,S}^{1}, \xi_{L,S}^{2},\ldots,\xi_{L,S}^{N}$ which lie in the interval. To do so we approximate the interval $[x,y)$
from above by elementary intervals of length at least $\beta^{n+1}$ and apply \eqref{discr} to each interval. 
The points $x$ and $y$ belong to some interval determining the $n$-th partition respectively.
In particular let us assume that $x$ lies in an elementary interval $[x'',x')$ and $y$ lies in an elementary interval $[y',y'')$ of length
$\beta^n$ respectively.

We approximate the interval $[x,y)$ from above, i.e. we try to cover the interval $[x',y')\subseteq [x,y)$ by as few as possible, disjoint elementary
intervals of length at least $\beta^{n+1}$. Starting from the point $x'$ we move to the right in order to reach the point $y'$ with steps of variable length.
By the definition of our $LS$-sequence there is an integer $0\leq \ell\leq L-1$ such that $x'_{1}=x'+\ell \beta^n+S\beta^{n+1}$ is the left endpoint
of an elementary interval of length at least $\beta^{n_1}\geq\beta^{n-1}$. In case that $y'\leq x'_1$ we have found our covering. On the other hand
there exists an integer $0\leq \ell\leq L-1$ such that $y'_1=y'-\ell\beta^n$ is the right endpoint of an elementary interval of length at least
$\beta^{n-1}$. Hence we are left by the problem to cover the interval $[x'_1,y'_1)$ by as few as possible, disjoint elementary
intervals of length at least $\beta^{n}$. It is now easy to see by induction on $n$ that $[x',y')$ is covered by at most $2L+S-2$ disjoint elementary
intervals of length $\beta^2,\dots,\beta^n$ respectively and at most $S$ intervals of length $\beta^{n+1}$ and $L$ invervals of length $\beta$.

Let $A_{x,y}(N)=\sharp \{l\: :\: l\leq N, \xi_{L,S}^l \in [x,y)\}$. Since by construction there are no $\xi_{L,S}^l\in[x',x)\cup[y,y')$
with $l\leq N$ we obtain by Lemma \ref{Lem:Discrepancy-1-dim-classical}
\begin{equation}\label{eq:disc_est_I}
 \begin{split}
\left|(y-x)-\frac{A_{x,y}(N)}N\right|\leq & |x-x'|+|y-y'|+\left|(y'-x')-\frac{A_{x',y'}(N)}N\right|\\
\leq & 2\beta^n+(2L+S-2)\sum_{m=0}^{n+1}\left|\frac{R(1-(-S\beta)^m)+1}N \right|
\end{split}
\end{equation}
where we used the inequality
$$1\geq 1-x\beta^m\geq 1-l_m \beta^m=1-\lambda_0-\lambda_1(-S\beta^2)^m>1-\lambda_0-\lambda_1=0,$$
where $l_m=l_{m,1}=\lambda_0 \beta^m +\lambda_1 (-S\beta)^m$ and $\lambda_0=\frac{L+\sqrt{L^2+4S}}{2\sqrt{L^2+4S}}$.
In order to establish Theorem \ref{constant} we are left to estimate the sum in \eqref{eq:disc_est_I}.
Since $|R|\leq \tilde R \frac{1-(S\beta)^{n-m+1}}{1-(S\beta)}$ we find
\begin{equation*}
\begin{split}
\left|\sum_{m=0}^{n+1}|R(1-(-S\beta)^m)|\right|\leq & \tilde R \sum_{m=0}^{n+1}\frac{(1-(-S\beta)^m)(1-(S\beta)^{n-m+1})}{1-(S\beta)}\\
\leq & \tilde R \sum_{m=0}^{n+1}\frac{(1+(S\beta)^m)(1-(S\beta)^{n-m+1})}{1-(S\beta)}\\
\leq & \frac{(n+2)\tilde R}{1-S\beta}
\end{split}
\end{equation*}
Since $\beta^{-n}\leq t_n\leq N$ we get $n\leq \frac{\log N}{|\log \beta|}$ and we obtain Theorem \ref{constant}.
\end{proof}

As an example we can compute the discrepancy of a particular $LS$-sequence obtained by taking $L=S=1$. This sequence, also called Kakutani-Fibonacci sequence, has been also analyzed in detail in 
\cite{civ, hit} in the frame of ergodic theory where it has been shown that it can be obtained as the orbit of an ergodic transformation.

Take $L=S=1$, then by Theorem \ref{constant} we have that
\begin{equation*}
 D_N(\xi_{1,1}^{1}, \xi_{1,1}^{2},\ldots, \xi_{1,1}^{N})\leq 2.366\frac{\log N}{N}+\frac{3.139}N
\end{equation*}
In the case that $L=10$ and $S=1$ we obtain  
\begin{equation*}
 D_N(\xi_{10,1}^{1}, \xi_{10,1}^{2},\ldots, \xi_{10,1}^{N})\leq 8.66\frac{\log N}{N}+\frac{22.02}N
\end{equation*}

In particular we obtain

\begin{corollary}
Let $S$ be fixed and assume that $L$ is large. Then we obtain
  $$\lim_{N \rightarrow \infty} \frac{ND_N(\xi_{L,S}^{1},\ldots, \xi_{L,S}^{N})}{\log N}\sim \frac{2L}{\log L}$$
as $L\rightarrow \infty$.
\end{corollary}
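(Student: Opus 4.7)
The plan is to apply Theorem~\ref{constant} and then carry out an asymptotic analysis in $L$ of the explicit constant it produces. Dividing the bound in that theorem by $\log N/N$ and letting $N\to\infty$, the remainder $B/N\cdot N/\log N = B/\log N$ vanishes since $B=B(L,S)$ does not depend on $N$, so I obtain
\begin{equation*}
\limsup_{N\to\infty}\frac{N\,D_N(\xi_{L,S}^{1},\ldots,\xi_{L,S}^{N})}{\log N} \;\le\; C(L,S):=\frac{2L+S-2}{|\log\beta|}\left(\frac{\tilde R}{1-S\beta}+1\right),
\end{equation*}
and it suffices to show $C(L,S)\sim 2L/\log L$ as $L\to\infty$ with $S$ fixed.

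The next step is to analyse each ingredient of $C(L,S)$. Since $\beta$ is the positive root of $S\beta^2+L\beta-1=0$, the identity $\beta=1/(L+S\beta)$ together with $0<\beta<1/L$ yields $\beta=1/L+O(1/L^{3})$; hence $|\log\beta|=\log L+O(1/L^{2})\sim\log L$ and $S\beta\to 0$, so $1/(1-S\beta)\to 1$. For $\tilde R$ I set $u:=\sqrt{L^{2}+4S}$ and use $u-L=4S/(u+L)$ to rewrite
\begin{equation*}
|\tau_{1}| = \frac{2S(L+S-1)}{u(L+2S+u)}\sim \frac{S}{L},\qquad \lambda_{1} = \frac{2S}{u(u+L)}\sim \frac{S}{L^{2}},
\end{equation*}
so that $(L+S-2)\lambda_{1}\sim S/L$. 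Both $|\tau_{1}|$ and $|\tau_{1}+(L+S-2)\lambda_{1}|$ are therefore $O(1/L)$, giving $\tilde R\to 0$.

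Combining these estimates, the bracketed factor $\tilde R/(1-S\beta)+1$ tends to $1$, while $(2L+S-2)/|\log\beta|\sim 2L/\log L$; multiplying the two yields $C(L,S)\sim 2L/\log L$, which is the asserted asymptotic. The main (mild) obstacle is the verification that $\tilde R\to 0$: both $\tau_{1}$ and $(L+S-2)\lambda_{1}$ are of exact order $1/L$, so to control $|\tau_{1}+(L+S-2)\lambda_{1}|$ one needs the expansion $u=L+2S/L+O(1/L^{3})$ with enough precision to track the signs and partial cancellation inside the maximum defining $\tilde R$. Once this book-keeping is carried out, the remaining steps are elementary Taylor expansions.
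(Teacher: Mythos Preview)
Your approach is correct and is exactly the one the paper takes: apply Theorem~\ref{constant}, pass to the limit in $N$ so that only the coefficient of $\log N/N$ survives, and then analyse that coefficient asymptotically in $L$ using $\beta\sim 1/L$, $|\log\beta|\sim\log L$, and $\tilde R\to 0$. The paper's own proof is terser (it simply records $\tilde R\sim S\beta\sim S/L$ and concludes), but the content is the same.

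One small remark: your final paragraph overstates the difficulty. No delicate sign or cancellation analysis is needed to control the second term inside the maximum defining $\tilde R$; the triangle inequality already gives
\[
|\tau_1+(L+S-2)\lambda_1|\;\le\;|\tau_1|+(L+S-2)\,\lambda_1 \;=\;O(1/L)+O(1/L)\;=\;O(1/L),
\]
so $\tilde R=O(1/L)\to 0$ follows immediately from the estimates you already derived for $|\tau_1|$ and $\lambda_1$.
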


\begin{proof}
By the formula given for $\tilde R$ in Lemma \ref{Lem:Discrepancy-1-dim-classical} and the fact that $\beta=\frac{-L+\sqrt{L^2+4S}}{2S}$ we obtain
that $\tilde R\sim S\beta \sim S/L$ hence
$$\frac{2L+S}{|\log \beta|}\left(\frac{\tilde R}{1-S\beta}+1\right)\sim \frac{2L}{\log L}$$
as $L\rightarrow \infty$ and $S$ is fixed.
\end{proof}

\begin{remark}\label{rem:1/2}
 One can rather easily improve our bound for the discrepancy by a factor $1/2$. This can be done by balancing our choice of the intervals that
 cover $[x,y)$. Let us assume that $x'$ is the right endpoint of an interval of length $\beta^n$ and let us assume that $x'$ lies nearer to a right
 endpoint $x_1'$ than to the left end point and we go from $x'$ to the right instead to the left and proceed in this manner,
 we need fewer intervals (roughly $1/2$-times fewer) to cover the interval $[x',y')$. We did not work out the details for this improvement,
 since the paper is already rather technical and this approach would further increase the technical difficulties.
\end{remark}

\begin{remark}
Faure \cite{Faure:1981} (see also \cite[page 25]{Niederreiter:RandomNumber}) obtains the upper bound 
$$\lim_{N\rightarrow \infty} N D_N(\xi^1_L,\dots,\xi^N_L)\sim \frac{L}{4\log L}$$
if $L$ is large, where $\xi_L$ denotes
the van der Corput sequence for base $L$. This shows that our
approach gives up to a factor $8$ (respectively $4$ considering the remark above) a similar main term for the discrepancy as one obtains for the
van der Corput sequence. 
\end{remark}

At the end of this section we want to compare our approach to obtain explicit bounds for the discrepancy of $LS$-sequences to the approach
due to Carbone \cite{carbone}.
Therfore we give explicit bounds for the star-discrepancy of the Kakutani-Fibonacci sequence of partitions $D_n(\rho_{1,1}^n)$ and of points
$D_N^*(\xi_{1,1}^n)$, where
$$D_n(\rho_{1,1}^n)=\sup_{0\leq b \leq 1} \left|\frac{1}{t_n}\sum_{i=1}^{t_n} \mathbf{1}_{[0, b)}(t_i^n)-b \right|.$$

Following \cite{carbone}, in order to find upper and lower bounds for $D_n(\rho_{1,1}^n)$, we need to estimate
\begin{equation*}
\frac{1}{t_n}\sum_{i=1}^{t_n} \mathbf{1}_{[0, b)}(t_i^n)-b ,
\end{equation*}
Thus we consider $[0, b)$ as a union of intervals defining the $p$-th
partitions $\rho_{1,1}^p$, for $p\leq n-1$. Therfore let us count how many consecutive intervals
$I_1^1,I_2^1,\dots I_{m_1}^1$ of $\rho_{1,1}^1$ are contained in $[0, b)$. Now, we count how many consecutive intervals $I_1^2,I_2^2,\dots I_{m_2}^2$
of $\rho_{1,1}^2$ are contained in $[0, b)\setminus
\bigcup_{i=1}^{m_1}I_i^1$ and so on. Of course it may happen that $m_k=0$ for some
$k$. Going on with this procedure we get
\begin{equation*}
\bigcup_{i=1}^{m_{n-1}}I_i^{n-1}\subset [0, b)\setminus
\bigcup_{p=1}^{n-2}\left(\bigcup_{i=1}^{m_p}I_i^p\right)\ .
\end{equation*}
Thus
\begin{equation*}
[0, b )=\bigcup_{p=1}^{n-1}\left(\bigcup_{i=1}^{m_p}I_i^p\right)\ .
\end{equation*}
In particular, we need to compute $m_p$. Since $L=S=1$, we have that if
$I_1^p=L^p=I_2^p$, then $I_1^p\cup I_2^p=I_i^{p-1}$ for some $i$, and if
$I_1^p=I_2^p=L_p$, then $I_1^p=S^{p-1}$. In particular, this shows that $m_p=1$.
Thus there are no consecutive intervals of the same partition. After some tedious computations, as in \cite[Equation 10]{carbone}, we finally get 
\begin{align*}
& \frac{1}{t_n}\sum_{i=1}^{t_n}
\mathbf{1}_{[0, b)}(t_i^n)-b=\\
&=\frac{1}{t_n}\frac{(\beta-1)}{(1+\beta^2)}(\beta^2(1-(-\beta)^n)-\beta+(-\beta)^n\beta^n(1+\beta)-(-\beta)^n)\ .
\end{align*}
By considering separately the case that $n$ is even and the case that $n$ is odd, one gets the following bounds:
\begin{equation*}
\frac{0.0652}{t_n}\leq \frac{1}{t_n}\sum_{i=1}^{t_n} \mathbf{1}_{[0, b)}(t_i^n)-b\leq \frac{0.4068}{t_n} \qquad {\rm if}\ n\ {\rm even}
\end{equation*}
and
\begin{equation*}
\frac{-0.2764}{t_n}\leq\frac{1}{t_n}\sum_{i=1}^{t_n} \mathbf{1}_{[0, b)}(t_i^n)-b \leq \frac{0.2764}{t_n} \qquad {\rm if}\ n\ {\rm odd}\ .
\end{equation*}
Thus
\begin{equation*}
D_n(\rho_{1,1}^n)\leq \frac{c_2}{t_n}
\end{equation*}
with $c_2=0.4068$.

Now, let $(\tilde \rho_{L,S}^n)_{n\in\mathbb{N}}$ be the sequence of long intervals $l_n$ of $\rho_{L,S}^n$. 
Then it is possible to bound its discrepancy (see \cite[Proposition 3.4]{carbone}) and obtain for $S<L+1$ and every $n\in\mathbb{N}$
\begin{equation}
\frac{\tilde c_1}{l_n}\leq D_n(\tilde \rho_{L,S}^n)\leq \frac{\tilde c_2}{l_n}\ ,
\end{equation} 
where $\tilde c_1$ and $\tilde c_2$ are constants independent of $n$. Furthermore, it can be shown that if $S<L+1$, then
\begin{equation*}
D_N^*(\xi_{L,S}^1,\dots,\xi_{L,S}^N)\leq 2\frac{c_2+(L+S-2)\tilde c_2}{N|\log \beta|}\log N+(L+S-2)\ ,
\end{equation*}
where $c_2$ is the constant in the upper bound of $D_n(\rho_{L,S}^n)$ (see \cite[Equation 29]{carbone}). 

Finally, plugging in $L=S=1$, we can give an estimate for the discrepancy of the sequence $(\xi_{1,1}^n)_{n\in\mathbb{N}}$.
\begin{equation*}
D_N^*(\xi_{1,1}^N)\leq 2\frac{c_2}{|\log\beta|}\frac{\log N}{N}=2\frac{0.4068}{|\log\beta|}\frac{\log N}{N}\sim 1.6911\frac{\log N}{N}\ .
\end{equation*}
Let us point out that this result is in accordance with the following relation
\begin{equation*}
D_N^*(x_n)\leq D_N(x_n)\leq 2D_N^*(x_n)\ ,
\end{equation*}
which holds true for every sequence of points $(x_n)_{n\in\mathbb{N}}$.

\section{The Discrepancy of generalized $LS$-sequences}

The aim of the present section is to provide bounds for the generalized-$LS$-sequences:

\begin{theorem}\label{Th:Discrepancy}
Let $(\xi_{L_1,\dots,L_k}^n)_{n\in\mathbb{N}}$ be a generalized-$LS$-sequence of points. Then
\begin{equation}
\begin{split}
D_N(\xi_{L_1,\dots,L_k}^{1}, & \ldots, \xi_{L_1,\dots,L_k}^{N})\leq\\ & \frac{\log (N+1)-\log|\lambda_{1,0}\beta^k|}{N|\log \beta|}
(2L_1+L_2+\dots+L_k-2)\tilde R,
\end{split}
\end{equation}
where
$$\tilde R=1+|\lambda_{1,0}|+\sum_{j=2}^k (2\Lambda_j +|\lambda_{j,0}|)$$
and 
$$\Lambda_j=\max_{\ell=2,\dots,k}\left\{ \frac{|\sum_{i=1}^\ell |\lambda_{j,i}|+(L_1+\dots+L_k-2)|\lambda_{j,1}|}{1-|\beta_j|^{-1}}\right\}$$
provided $N$ is large enough.
\end{theorem}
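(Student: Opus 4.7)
The proof will mirror the strategy of the proof of Theorem \ref{constant}, adapted to the $k$-length refinement structure. I fix an arbitrary subinterval $[x,y)\subseteq [0,1)$ and let $n=\max\{j:t_j\leq N<t_{j+1}\}$. The idea is to approximate $[x,y)$ by a disjoint union of elementary intervals whose occupancy I can estimate via Lemma \ref{Lem:Discrepancy-1-dim}, and then to sum the resulting errors.

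First, I would identify left-endpoints $x',y'$ of elementary intervals in $\rho^n_{L_1,\dots,L_k}\omega$ with $x\leq x'$ and $y'\leq y$; the loss $|x-x'|+|y-y'|$ is at most $2\beta^n$, which is absorbed into the tail of the bound. Then I cover $[x',y')$ by disjoint elementary intervals of lengths $\geq\beta^{n+k-1}$. Starting from $x'$ and walking rightwards, the structure of the generalized refinement guarantees that after absorbing at most $L_1-1$ intervals of the current shortest length $\beta^m$, followed by at most $L_i$ intervals of the ancillary lengths $\beta^{m+i-1}$ for $i=2,\dots,k$, I reach the left endpoint of an elementary interval of strictly larger length. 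Mirroring this argument from $y'$ leftwards, and invoking Lemma \ref{lem:sum_growth} to guarantee that the nested counts at successive levels remain compatible, I obtain that $[x',y')$ is covered by at most $2L_1+L_2+\dots+L_k-2$ disjoint elementary intervals at each length-scale $\beta^m$ for $m=2,\dots,n+k-1$.

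For each elementary interval $I_x^{(m)}$ in the cover, Lemma \ref{Lem:Discrepancy-1-dim} gives
\[
\Bigl|\tfrac{A_x^{(m)}(N)}{N}-\beta^m\Bigr|\;\leq\;\tfrac{\tilde R}{N}.
\]
Summing over the cover and over the $O(n)$ scales yields
\[
\Bigl|(y-x)-\tfrac{A_{x,y}(N)}{N}\Bigr|\;\leq\;2\beta^n+(2L_1+L_2+\dots+L_k-2)\,\tfrac{(n+k)\tilde R}{N}.
\]
By Lemma \ref{lem:Rec}, $t_n$ grows like $|\lambda_{1,0}|\beta^{-n}$ up to exponentially-small corrections from the other $\beta_j$, so $t_n\leq N$ translates into $n\leq (\log(N+1)-\log|\lambda_{1,0}\beta^k|)/|\log\beta|$ once $N$ is large enough. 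Substituting this upper bound for $n$ and absorbing the lower-order terms $2\beta^n$ and $O(1/N)$ into the leading asymptotic (again for $N$ large enough), and taking the supremum over $[x,y)\subseteq [0,1)$, yields the stated estimate.

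The main obstacle is the combinatorial counting argument in the covering step. For $k=2$ the induction in the proof of Theorem \ref{constant} is clean because only two length-scales are involved; for general $k$ one must track the $k$-fold nested refinement structure and verify that an absorption step at level $m$ is compatible with the next level $m+1$. Lemma \ref{lem:sum_growth} is tailored exactly for this bookkeeping, but carrying out the induction explicitly — and checking that the constant really does collapse to $2L_1+L_2+\dots+L_k-2$ rather than a larger expression involving all the $L_i$'s — is the delicate point. Once this combinatorial count is in hand, the discrepancy estimate follows by routine substitution and the asymptotic for $t_n$.
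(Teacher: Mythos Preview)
Your proposal is correct and follows essentially the same route as the paper's proof: choose $n$ with $t_n\leq N<t_{n+1}$, approximate $[x,y)$ by $[x',y')$ with endpoints in $\rho^n_{L_1,\dots,L_k}\omega$, cover $[x',y')$ by at most $2L_1+L_2+\dots+L_k-2$ elementary intervals per length-scale, apply Lemma~\ref{Lem:Discrepancy-1-dim} to each, and convert the resulting bound in $n$ into one in $\log N$ via the asymptotic $t_n\sim|\lambda_{1,0}|\beta^{-n}$. The only cosmetic difference is that the paper does not invoke Lemma~\ref{lem:sum_growth} in the covering step---it simply asserts the count $2L_1+L_2+\dots+L_k-2$ by direct analogy with the $k=2$ argument of Theorem~\ref{constant}---whereas you flag this bookkeeping as the delicate point and propose to use Lemma~\ref{lem:sum_growth} to justify it.
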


\begin{proof}
The proof of Theorem \ref{Th:Discrepancy} runs along the same lines as the proof of Theorem~\ref{constant}. 

As above we put $n=\max\{m :t_m\leq N< t_{m+1}\}$ and
we consider an arbitrary subinterval $[x,y)\subset [0,1)$ and approximate it
from above by elementary intervals. As in the $LS$-case we assume that $x\in [x'',x')$ and $y \in [y',y'')$ lie in elementary intervals
of length $\beta^n$ respectively.

Let $x_1'$ be the next left endpoint of an elementary interval of length at least $\beta^{n-1}$ from $x'$. We need at most $L_1+\dots+L_k-1$
intervals of variable length to cover the interval $[x',x_1')$. Similarly we proceed for the interval $[y_1',y')$, where $y_1'$ is the nearest
right endpoint of an elementary interval of length at least $\beta^{n-1}$ lying left of $y'$. Obviously we need at most $L_1-1$ elementary intervals
of length $\beta^{n}$ to cover $[y_1',y')$. Similar as in the proof of Theorem \ref{constant} we conclude that we need at most
$2L_1+L_2+\dots+L_k-2$ elementary intervals of each length $\beta^{\ell}$ with $\ell=1,\dots,n+k$ to cover $[x',y')$.

Using similar notations as in the proof of Theorem \ref{constant} and by Lemma \ref{Lem:Discrepancy-1-dim} we obtain

\begin{equation}\label{eq:disc_est_II}
 \begin{split}
\left|(y-x)-\frac{A_{x,y}(N)}N\right|\leq & |x-x'|+|y-y'|+\left|(y'-x')-\frac{A_{x',y'}(N)}N\right|\\
\leq & 2\beta^n+(2L_1+\dots+L_k-2)(n+k)\tilde R,
\end{split}
\end{equation}
where
$$\tilde R=1+|\lambda_{1,0}|+\sum_{j=2}^k (2\Lambda_j +|\lambda_{j,0}|)$$
and 
$$\Lambda_j=\max_{\ell=2,\dots,k}\left\{ \frac{|\sum_{i=1}^\ell |\lambda_{j,i}|+(L_1+\dots+L_k-2)|\lambda_{j,1}|}{1-|\beta_j|^{-1}}\right\}.$$

Since $|\lambda_{1,0}|\beta^{-n}-1\leq t_n\leq N$, provided $N$ is large enough, we get 
$$n\leq \frac{\log (N+1)-\log|\lambda_{1,0}|}{|\log \beta|}$$
and we obtain Theorem \ref{Th:Discrepancy}.
\end{proof}

Let us compute the discrepancy for a concrete example:

\begin{example}
Let us consider the case of the $LMS$-sequence considered above (see Example \ref{ex:LMS}), with $L=2$ and $M=S=1$. To ease the
notations we write $l_n=l_{n,1},m_n=l_{n,2}$ and $s_n=l_{n,3}$. First we note that the roots or the polynomial $X^3+X^2+2X-1$ are
$\beta\simeq 0.393$, $\beta_{2,3}\simeq -0.696\pm i1.436$. Moreover, we can apply Cramer's rule as in \eqref{eq:lambdas} to compute the $\lambda_{j,i}$'s, for $1\leq j\leq 3$ and $0\leq i\leq 3$. Then by Theorem \ref{Th:Discrepancy} we have that
\begin{equation*}
 D_N(\xi_{2,1,1}^{1}, \xi_{2,1,1}^{2},\ldots, \xi_{2,1,1}^{N})\leq 51.4562\frac{\log (N+1)}{N}+\frac{122.5173}N\ .
\end{equation*}
\end{example}

Finally let us state the following remark.

\begin{remark}
 Let us note that the proof of Theorem \ref{Th:Discrepancy} leaves a lot of room for improvement. First, as already explained in
 Remark \ref{rem:1/2} by more carefully choosing the intervals which cover $[x',y')$ we might replace the factor $2L_1+L_2+\dots+L_k-2$
 by something like $L_1+L_2+\dots+L_k$. Moreover, the estimates for $\Lambda_j$ and $R$ in Lemma \ref{Lem:Discrepancy-1-dim} are rather
 rough and can be certainly improved in concrete cases. 
\end{remark}


\end{document}